\setlist{itemsep=1pt}
\newcommand{\googlebooks}[1]{(preview at \href{https://books.google.com/books?id=#1}{google books})}
\newcommand{\numdam}[1]{}
\theoremstyle{plain}
\newtheorem{thm}{Theorem}[section]
\newtheorem{theorem}[thm]{Theorem}
\newtheorem*{thm-nono}{Theorem}
\newtheorem{proposition}[thm]{Proposition}
\newtheorem{corollary}[thm]{Corollary}
\newtheorem{lemma}[thm]{Lemma}
\theoremstyle{definition}
\newtheorem{definition}[thm]{Definition}
\newtheorem*{theoremA}{Theorem A}
\newtheorem*{corollaryB}{Corollary B}
\newtheorem*{theoremC}{Theorem C}
\newtheorem{remark}[thm]{Remark}
\newtheorem{construction}[thm]{Construction}
\newtheorem{example}[thm]{Example}
\newtheorem*{exa-nono}{Example}
\newtheorem*{rem-nono}{Remark}
\tikzstyle directed=[postaction={decorate,decoration={markings,
    mark=at position #1 with {\arrow{>}}}}]
\tikzstyle rdirected=[postaction={decorate,decoration={markings,
    mark=at position #1 with {\arrow{<}}}}]
\tikzset{anchorbase/.style={baseline={([yshift=-0.5ex]current bounding box.center)}},
  tinynodes/.style={font=\tiny,text height=0.75ex,text depth=0.15ex},
  smallnodes/.style={font=\scriptsize,text height=0.75ex,text depth=0.15ex},
  >={Latex[length=1mm, width=1.5mm]}
}
\tikzset{
    partial ellipse/.style args={#1:#2:#3}{
        insert path={+ (#1:#3) arc (#1:#2:#3)}
    }
}
\newcommand{\ru}{to [out=0,in=270]}
\newcommand{\rd}{to [out=0,in=90]}
\newcommand{\ur}{to [out=90,in=180]}
\newcommand{\ul}{to [out=90,in=0]}
\newcommand{\dr}{to [out=270,in=180]}
\newcommand{\dl}{to [out=270,in=0]}
\def\R{\mathbb{R}}
\def\Z{\mathbb{Z}}
\def\C{\mathbb{C}} 
\newcommand{\kb}{{\mathbb{k}}}
\def\bd{\partial}
\newcommand{\sll}{\mathfrak{sl}}
\newcommand{\gl}{\mathfrak{gl}}
\newcommand{\glN}{\mathfrak{gl}_N}
\newcommand{\glone}{\mathfrak{gl}_1}
\newcommand{\GLN}{{\mathrm{GL}(N)}}
\newcommand{\I}{{[0,1]}} % unit interval
\newcommand{\CC}[1]{\left\llbracket #1 \right\rrbracket}
\newcommand{\KhR}{\mathrm{KhR}}
\newcommand{\fr}{\mathrm{fr}} 
\newcommand{\skeingen}{\mathcal{S}_{0,\fr}^\bullet} % degree zero blob homology
\newcommand{\skein}{\mathcal{S}_{0}^N} % degree zero blob homology
\newcommand{\fskein}{\mathcal{S}_{0,\fr}^N} % degree zero blob homology, framed
\newcommand{\skeins}{\mathcal{S}_{0,\fr}^\Sigma}
\newcommand{\skeinsmon}{\mathcal{S}_{0,\fr}^{\lambda_i\in \Sigma}}
\newcommand{\skeine}{\mathcal{S}_{0,\fr}^\GLN}
\newcommand{\skeini}{\mathcal{S}_{0,\fr}^{N_i}}
\newcommand{\skeinitw}{\mathcal{S}_{0,\fr}^{N_i,\mathrm{tw}}}
\newcommand{\skeinone}{\mathcal{S}_{0,\fr}^{1}}
\newcommand{\surf}{S}
\newcommand{\defset}{\Sigma}
\newcommand{\frameballp}{f_+}
\newcommand{\frameballpn}{f_{\pm}}
\newcommand{\immballp}{h_+}
\newcommand{\immballn}{h_-}
\newcommand{\immballpn}{h_{\pm}}
\newcommand{\immballnp}{h_{\mp}}
\definecolor{kwcolor}{rgb}{.05, .5, .3}
\definecolor{dred}{rgb}{.7, 0, 0}
\definecolor{pwcolor}{rgb}{.15, .5, .5}
\newcommand{\revise}[1]{\textcolor[rgb]{0.00,0.00,0.00}{#1}}
\newcommand{\revadd}[1]{\textcolor[rgb]{0.00,0.00,0.00}{#1}}
\title{Invariants of surfaces in smooth $4$-manifolds from link homology} 
\author{Kim Morrison, Kevin Walker and Paul Wedrich}
\begin{document}

\maketitle

\begin{abstract}
  We construct analogs of Khovanov--Jacobsson classes and the Rasmussen
  \revadd{genus bound} for links in the boundary of any smooth oriented 4-manifold. The
  main tools are skein lasagna modules based on equivariant and deformed versions
  of $\glN$ link homology, for which we prove non-vanishing and decomposition
  results. \revadd{Along the way, we characterize precise technical conditions that allow a link homology theory to 
  extend to skein lasagna 4-manifold invariants, we establish a decomposition theorem for deformed $\glN$ skein lasagna modules, and we illustrate how Hopf link homology classes can be used to extend the functoriality of link homology theories to immersed link cobordisms.}
\end{abstract}

\section{Introduction}

In this article, we construct invariants of smooth surfaces $S$ in a given
smooth, compact, oriented $4$-manifold $W$, bounding a given framed oriented
link $L\subset \partial W$, representing a specified relative second homology
class. \revadd{We then demonstrate how these invariants yield genus bounds for such surfaces, which generalize the Rasmussen invariant for links in $S^3$.}

The main tools are \emph{skein lasagna modules}, which are invariants of
$4$-manifolds depending on a suitably functorial homology theory for links in
the three-sphere. In \cite{MR4562565} the authors introduced and constructed
such invariants based on the bigraded $\glN$ link homologies first studied by
Khovanov and Rozansky. More specifically, for a smooth, compact, oriented
$4$-manifold and, in the non-closed case, a framed, oriented link $L\subset
\partial W$ (which is allowed to be the empty link) the skein lasagna module
$\skein(W;L)$ is a $H_2(W,L)\times \Z\times \Z$-graded abelian group, which
recovers the $\glN$ link homology when $W$ is the $4$-ball.

The skein lasagna modules of $W$ are designed to serve as a natural receptacle
for an invariant of smooth surfaces in $W$ that locally behaves like the $\glN$
link homology. Here we make this idea precise and explain how a smoothly
embedded oriented surface $\surf$ determines an element of tridegree $([\surf],
\deg_q(\surf), \deg_t(\surf))\in H_2(W,L)\times \Z\times \Z$ where
\[
  \deg_q(\surf):= (1-N)\chi(\surf) - N [\surf]\cdot[\surf] 
  \quad ,\quad 
  \deg_{t}(\surf) := [\surf]\cdot[\surf]
  \, 
\] 
depend on the Euler characteristic $\chi(\surf)$ and the self-intersection of
$\surf$ computed with respect to the push-off determined by the framing of $L$
along the boundary. When $W=B^4$ and $N=2$, the surface invariant is given by
the \emph{relative Khovanov--Jacobsson classes} \cite{MR4562563} or, in other
words, by cobordism maps in Khovanov homology, which are known to detect exotica
\cite{hayden2021khovanov,hayden2023seifert}.

The key result that makes these surface invariants interesting beyond the case
of the $4$-ball is the following non-vanishing theorem that we formulate for the
$\GLN$-equivariant version of the lasagna skein module that we also construct in
this paper. To state the result, we call an oriented surface $\surf \subset W$
\emph{homologically diverse} if no union of a \revadd{nonempty} 
subset of closed
components is nullhomologous in $W$. Examples of homologically diverse surfaces
include surfaces without closed components and surfaces with a single,
homologically essential closed component.

\begin{theoremA}\label{thm:nonvanishing} Let $W$ be a smooth, compact, oriented
  $4$-manifold with a framed oriented link $L$ decorating the boundary. Then any smoothly
  embedded oriented surface $\surf\subset W$ with $\partial \surf=L$ that is
  homologically diverse defines a class of $H_2(W\revise{,} L)$-degree $[\surf]$,
  $t$-degree $\deg_t(\surf)$, and $q$-degree $\deg_q(\surf)$ of the
  $\GLN$-equivariant skein lasagna module $\skeine(W;L)$ that is
  \emph{non-trivial modulo torsion} for the action of $H^*(B\GLN)$.
  \end{theoremA}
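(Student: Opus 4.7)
\medskip

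\noindent\textbf{Proof proposal.} The plan is to reduce non-vanishing in the equivariant skein lasagna module to a computation in a generic deformation, where $\bmKhR$ splits into simpler summands indexed by colorings of link components.

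First, I would produce an explicit lasagna representative $x_\surf \in \skeine(W;L)$ for the surface class: fix a relative handle decomposition of $(W,L)$ whose cocores carry small $4$-balls, isotope $\surf$ so that it meets each such ball in a collection of oriented disks, and assemble these disks (together with the induced links on the intermediate $3$-spheres) into a lasagna filling. That its tridegree equals $([\surf], \deg_q(\surf), \deg_t(\surf))$ follows from the Euler characteristic and self-intersection formulas already recorded in the paper.

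Next, I would specialize to the deformed theory $\skeinsmon$ for $\Sigma=\{\lambda_1,\dots,\lambda_N\}$ a set of formal parameters, related to $\skeine$ by the base change that sends the elementary symmetric functions to those of the $\lambda_i$. Over the fraction field $K$ of $\Z[\lambda_1,\dots,\lambda_N]$, Gornik/Krasner-type splitting results for equivariant $\glN$ link homology provide a decomposition of the underlying link invariant into rank-one ``colored'' summands, one for each labeling of link components by elements of $\{1,\dots,N\}$. I expect this splitting to propagate through the lasagna construction: after $\otimes_{H^*(B\GLN)} K$, the module $\skeine(W;L)\otimes K$ decomposes as a direct sum indexed by labelings of every link appearing in a lasagna filling, with the gluing/isotopy relations diagonal in these labels.

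In each such summand, the class of a connected piece of the surface evaluates to an explicit scalar in $K$, built from $\chi$, the self-intersection with respect to the chosen framings, and the fixed labels via neck-cutting and the Gornik sphere evaluation $\prod_{i\neq j}(\lambda_i-\lambda_j)^a$. The contribution of all of $\surf$ to a fixed labeling is a product of such non-zero scalars over the connected components, provided one chooses labelings constant on each component and compatible with the orientations across interior 3-spheres. Here is where the homological diversity hypothesis becomes essential: it ensures that for every non-empty sub-collection of closed components one can choose distinct colorings, so no linear relation coming from capping off null-homologous combinations forces the chosen summand to vanish. Lifting back, non-vanishing of $x_\surf$ in the generic fiber $\skeine(W;L)\otimes K$ means $x_\surf$ is non-torsion over $H^*(B\GLN)$.

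The main obstacle is the middle step: establishing that the Gornik/Krasner splitting of the deformed link homology lifts coherently to a decomposition of $\skeinsmon(W;L)\otimes K$ that is respected by the gluing relations on lasagna fillings, and checking that the summands are large enough (in the presence of interior lasagna spheres) to separate closed components as demanded by homological diversity. Once this is established, the surface-evaluation and localization steps are essentially the deformed sphere/torus calculations together with a routine torsion argument.
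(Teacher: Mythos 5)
Your overall strategy matches the paper's: reduce non-torsion in $\skeine(W;L)$ to non-vanishing in a generic deformed theory, decompose the deformed skein lasagna module into colored summands, and evaluate the surface there via neck-cutting. Working over the fraction field of $\Z[\lambda_1,\dots,\lambda_N]$ rather than choosing, post hoc, complex numbers $\lambda_i$ outside the vanishing locus of a putative annihilator $r\in R_\GLN$ is a harmless variation. Two details are also simpler in the paper than in your sketch: the surface class is defined directly by puncturing $\surf$ and inserting framing-changing input balls (Definition~\ref{def:skeinfromsurface}), no handle decomposition required; and the role of homological diversity is a degree argument --- the contributions of the various colorings of closed components of $\surf$ lie in distinct homogeneous components of the $\bigotimes_i H_2(W,L_{c^{-1}(\lambda_i)})$-grading on the right-hand side of \eqref{eqn:chromatographyGornikskein}, so they cannot cancel (Proposition~\ref{prop:techcond}).

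The step you correctly flag as ``the main obstacle'' is, however, a genuine gap and not a routine propagation: it is precisely Theorem~\ref{thm:defskein}, and your sketch supplies none of the ideas that make it go through. Concretely: (a) a superposition of monochromatic lasagna fillings is not a lasagna filling, because differently colored sheets intersect transversely; the paper repairs this by drilling out a small ball at each intersection point and inserting a canonical class $\immballpn$ in the $\Sigma$-deformed homology of the bicolored Hopf link (Example~\ref{example:Hopfsplit}, Construction~\ref{constr:chromatography}), and verifying that the result is well-defined, invertible, and natural under link cobordism (Proposition~\ref{prop:chromatography}) is most of the work. (b) The resulting decomposition is only an isomorphism of link homology \emph{theories} --- hence only descends to skein modules --- after twisting each $\gl_{N_i}$ factor by an Euler-characteristic--dependent scalar, because $e(\lambda_i)$-colored dotted spheres evaluate to $\prod_{j\neq i}(\lambda_i-\lambda_j)^{-N_j}$ rather than $1$; without this twist the naturality square \eqref{eqn:chromatographynaturality} fails. (c) Finally, turning ``explicit scalar evaluation in each summand'' into non-vanishing for an arbitrary $4$-manifold $W$ requires knowing that the multiplicity-one summands compute relative second homology; this is Theorem~\ref{thm:oneskein}, which your proposal uses implicitly but does not establish. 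Until (a)--(c) are supplied, the proposal is an outline of the correct strategy rather than a proof.
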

  We will prove Theorem A in Section~\ref{sec:nonvanishing} based on a study of
  equivariant and deformed skein lasagna modules\revadd{. Before describing these in more detail, we first discuss how Theorem A} yields an Euler characteristic/genus bound for smooth surfaces in $W$.   To
  state it, we denote by $H_2(W)^L:=\delta^{-1}([L])\subset H_2(W\revise{,} L)$ the
  $H_2(W)$-torsor of the relative second homology classes in the preimage of the
  fundamental class of the link under the connecting homomorphism in the long
  exact sequence for relative homology.

\begin{corollaryB}
  Let $W$ be a smooth, compact, oriented $4$-manifold with a framed oriented
  link $L$ decorating the boundary. For $\alpha \in H_2(W)^L$ we let 
  \[q^N_{\min}(\alpha):=q^N_{\min}(W,L,\alpha)\in
\Z\cup \{-\infty\}\] denote the minimal $q$-degree of a non-zero class of $\skeine(W;L)/\textup{torsion}$ in bidegree $(\alpha,\alpha\cdot \alpha) \in
H_2(W)^L\times \Z_t$ (which is non-trivial by Theorem A).
For any smoothly embedded oriented surface $\surf\subset W$ with $\partial
\surf=L$ that is homologically diverse we then have:
\begin{align*}
 \chi(\surf) & \leq \frac{-N [\surf] \cdot [\surf]-q^N_{\min}([\surf]) }{N-1}
\end{align*}
\end{corollaryB}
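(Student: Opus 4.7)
The plan is to derive Corollary B as an essentially immediate consequence of Theorem A together with bookkeeping of the tridegree of the surface class. The setup already assembles all the ingredients: by Theorem A, any homologically diverse oriented surface $\surf\subset W$ with $\partial\surf=L$ determines a class $[\surf]_{\skeine}\in\skeine(W;L)$ that is non-trivial modulo torsion, and this class lives in tridegree $([\surf],\deg_q(\surf),\deg_t(\surf))$, where the explicit formulas
\[
  \deg_q(\surf)=(1-N)\chi(\surf)-N[\surf]\cdot[\surf],\qquad \deg_t(\surf)=[\surf]\cdot[\surf]
\]
are recalled in the introduction.

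First I would observe that, writing $\alpha:=[\surf]\in H_2(W)^L$, the $H_2$-degree and $t$-degree of $[\surf]_{\skeine}$ are precisely $\alpha$ and $\alpha\cdot\alpha$. Thus $[\surf]_{\skeine}$ sits in the bidegree $(\alpha,\alpha\cdot\alpha)\in H_2(W)^L\times\Z_t$ singled out in the definition of $q^N_{\min}(\alpha)$. Since $[\surf]_{\skeine}$ is non-trivial in $\skeine(W;L)/\textup{torsion}$ by Theorem A, by definition of the minimal $q$-degree we must have
\[
  q^N_{\min}([\surf])\;\le\;\deg_q(\surf)\;=\;(1-N)\chi(\surf)-N[\surf]\cdot[\surf].
\]

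Rearranging, using $N-1>0$, yields
\[
  (N-1)\chi(\surf)\;\le\;-N[\surf]\cdot[\surf]-q^N_{\min}([\surf]),
\]
which is exactly the stated inequality. The only subtle point worth a line of commentary is the well-definedness of $q^N_{\min}(\alpha)$: Theorem A applied to any (for instance Seifert-type) homologically diverse surface representing $\alpha$ guarantees the existence of at least one non-torsion class in the relevant bidegree, so the infimum is indeed taken over a non-empty set and belongs to $\Z$ rather than being $+\infty$. There is no analytic obstacle here; the content of the corollary is entirely concentrated in Theorem A, and the proof is a one-step translation of non-vanishing in a graded module into a degree inequality.
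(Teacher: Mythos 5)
Your proof is correct and matches the paper's treatment exactly: the paper dispatches Corollary B with a single sentence ("Corollary B is an immediate consequence"), and your derivation simply makes explicit the bookkeeping of the tridegree of the surface class together with the definition of $q^N_{\min}$, which is precisely what "immediate" means here.
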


\begin{exa-nono} For $W=B^4$ there is just the trivial relative second homology
class $0$ and when $N=2$ and $L$ is a knot (or link), the invariant $q^2_{\min}(0)$ is a
close relative of the \emph{Rasmussen invariant}~\cite{MR2729272} (and \cite{MR2462446}), see the
discussion about the link homology of type $\mathcal{F}_3$ in \cite{MR2232858}. For a discussion of genus bounds
derived from equivariant $\glN$ link homology we refer to \cite{MR3939052},
especially Section 5 therein. 
\end{exa-nono}

\revise{The genus bounds from Corollary B are just one way of extracting topological information about embedded surfaces from skein lasagna modules. For links in the boundary of $B^4$, the genus bound is a generalization of the Rasmussen invariant, and is sharp for positive links. It would be interesting to compare
  the bounds from Corollary B to the extended Rasmussen invariants from
  \cite{MR4541332,manolescu2023rasmussen}, which yield bounds for surfaces in
  (boundary) connected sums of $S^2\times D^2$, $S^1 \times B^3$, and \revise{$\C P^2\setminus B^4$,
  and in $\mathbb{R}P^3\times I$}, respectively. It would not be surprising if
  these bounds differ, given that they are based on a chain-level construction,
  while here we work on the level of homology. In that sense, the extended
  Rasmussen invariants from \cite{MR4541332,manolescu2023rasmussen} may be
  closer to a to-be-constructed chain-level analog of lasagna skein modules, see 
  \cite{liu2024braided} for developments in this direction.}

\begin{rem-nono}
  \revadd{ Earlier versions of this paper contained preliminary computations and speculations about applications of the techniques developed here. 
%Since then, there have been several developments by other authors, which provide much better applications than we even hoped for.
Since then, more compelling applications of these techniques have appeared in the work of other authors.
Ren and Willis \cite{ren2025khovanovhomologyexotic4manifolds} study a particular case of Lee-deformed skein lasagna modules for $\mathfrak{gl}(2)$ and use filtration arguments to define genus bounds akin to the Rasmussen invariant. This, together with various vanishing and non-vanishing results lead to the detection of exotic pairs of 4-manifolds. Sullivan \cite{sullivan2025barnatanskeinlasagnamodules} uses a different deformation of $\mathfrak{gl}(2)$ skein lasagna modules to detect exotic pairs of surfaces and to show in an example that the surfaces stay exotic after one internal stabilization.}
\end{rem-nono}

We finish this introduction with a brief overview of the proof of Theorem A. The
paper starts with the construction of a $\GLN$-equivariant skein lasagna modules
$\skeine(W;L)$ and deformed skein lasagna modules $\skeins(W;L)$, which recover
equivariant and deformed link homology theories, such as Lee homology
\cite{MR2173845}, when $W=B^4$; see also \cite{MR2232858,0402266}. The
equivariant version is defined over the ring $H^*(B\GLN)\cong \Z[e_1,\dots,e_N]$
while the deformed versions are defined over $\C$ and depend on an $N$-element
multiset $\Sigma$ of complex deformation parameters. \revadd{Along the way, we characterize the precise technical conditions that allow a link homology theory to extend to skein lasagna modules along the lines of \cite{MR4562565}, see Theorem~\ref{thm:skeinfromLH}.}
\smallskip

The proof of Theorem A now proceeds in \revise{three} main steps:
\begin{enumerate}
  \item The surface invariant in $\skeine(W;L)$ maps to the surface invariant in
  $\skeins(W;L)$ under a suitable specialization of the variables $e_i$ (Proposition~\ref{prop:equivtodef}).
  \item The deformed skein lasagna module $\skeins(W;L)$ decomposes into
  smaller-rank skein lasagna modules according to the multiplicities of the
  elements of $\Sigma$ (Theorem~\ref{thm:defskein}). If all multiplicities are
  one, the decomposition is into $\glone$ skein lasagna modules.
  \item The $\glone$ skein lasagna modules compute relative second homology (Theorem~\ref{thm:oneskein}) and
  the invariants of homologically diverse surfaces thus cannot vanish (Proposition~\ref{prop:techcond}). 
\end{enumerate}
The invariant of a homologically diverse surface $\skeine(W;L)$ cannot be
torsion, for otherwise one could find a deformation $\Sigma$ with multiplicities all one for which the
surface class would vanish in $\skeins(W;L)$, a contradiction.

We record the main result of the second step here, as it may be of independent interest:

\begin{theoremC}[Theorem~\ref{thm:defskein}]Let $W$ be a $4$-manifold and $L\subset
  \partial W$ a link in the boundary. For any multiset
  $\defset=\{\lambda_1^{N_1},\dots \lambda_n^{N_n}\}$ of complex deformation
  parameters we have an isomorphism of $H_2(W\revise{,} L)\times \Z_t$-graded $\C$-vector
  spaces:
\[
\skeins(W;L) \cong \bigoplus_{c\in C(L,\defset)} \bigotimes_{i=1}^n \skeinitw(W; L_{c^{-1}(\lambda_i)}, \C)   
\]
where \begin{itemize}
  \item $C(L,\defset)$ is the set of \emph{colorings} of the component of $L$ by
elements of $\defset$,
\item $L_{c^{-1}(\lambda_i)}$ is the sublink of $L$ colored
by $\lambda_i$ for a given $c\in C(L,\defset)$,
\item $\skeinitw(W;
L_{c^{-1}(\lambda_i)}, \C)$ is isomorphic to the $\gl_{N_i}$ skein module
$\skeini(W; L_{c^{-1}(\lambda_i)}, \C)$ with the $q$-grading forgotten.
\end{itemize}
In particular, when $W=B^4$, we obtain a functorial upgrade of the decomposition theorem of
\cite{MR3590355} for uncolored deformed link homologies: There exists a natural isomorphism of $\Z_t$-graded $\C$-linear link homologies

\begin{equation}
  \label{eq:ThmC}
    \KhR_{\defset,\fr}(L) \cong \bigoplus_{c\in C(L,\defset)} \bigotimes_{i=1}^n 
  \KhR^{\mathrm{tw}}_{N_i,\fr}(L_{c^{-1}(\lambda_i)}, \C)   
\end{equation}

where \begin{itemize}
  \item $\KhR_{\defset,\fr}$ denotes the $\Sigma$-deformed link homology with framing-adapted grading conventions,
\item $\KhR^{\mathrm{tw}}_{N_i,\fr}$ denotes the undeformed $\gl_{N_i}$ link
homology in which cobordism maps have been rescaled by a factor depending on the
Euler characteristic, $\Sigma$, and the coloring $c$.
\end{itemize}
\end{theoremC}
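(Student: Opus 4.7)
The plan is to bootstrap from a functorial enhancement of the decomposition theorem of \cite{MR3590355}, then transfer it through the colimit defining $\skeins(W;L)$. The key algebraic input is that the $\Sigma$-deformation polynomial $\prod_i (x-\lambda_i)^{N_i}$ factors via the Chinese Remainder Theorem, yielding a complete orthogonal system of idempotents $\{e_1,\dots,e_n\}$ on a single strand of the $\Sigma$-deformed foam category. Labelling each component of a link $L$ by one of these idempotents gives a decomposition of the identity $\mathrm{id}_L = \sum_{c \in C(L,\Sigma)} e_c$, and the matching-foams argument of \cite{MR3590355} identifies the $c$-summand, as an object, with the tensor product over $i$ of the undeformed $\gl_{N_i}$ link homologies of the sublinks $L_{c^{-1}(\lambda_i)}$.

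First I would promote this to a natural isomorphism of functors from framed oriented links with oriented cobordisms in $B^4$ to graded $\C$-modules. The required rescaling arises because dot and neck-cut relations in the $\Sigma$-deformed foam category differ from their $\gl_{N_i}$ counterparts by scalars determined by $\Sigma \setminus \{\lambda_i\}$ and the local Euler characteristic of the relevant facet. Checking elementary cobordisms (cup, cap, saddle) and composition—where multiplicativity of the scalar reduces to additivity of Euler characteristics—shows that these scalars assemble into a well-defined functorial twist, justifying the notation $\KhR^{\mathrm{tw}}_{N_i,\fr}$.

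Next I would transfer the decomposition through the colimit defining $\skeins(W;L)$. A lasagna filling has input balls $B_j \subset W$ decorated by classes $\alpha_j \in \KhR_\Sigma(L_j)$ and a surface $\Sigma_F \subset W$ cobording $\bigsqcup_j L_j$ to $L$. Applying the link-level decomposition to each $\alpha_j$ and propagating it along $\Sigma_F$ via the functoriality from the previous step forces any coloring to be constant on each connected component of $\Sigma_F$. Hence a filling with classes in the $c$-summand of $\KhR_\Sigma$ is the same datum as an $n$-tuple of fillings in the twisted $\gl_{N_i}$-theory of $L_{c^{-1}(\lambda_i)}$, together with a choice of $\lambda_i$ for each closed component of $\Sigma_F$; summing over the latter inside the colimit produces the tensor product of $\skeinitw(W, L_{c^{-1}(\lambda_i)}, \C)$. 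The main obstacle is the bookkeeping here: one must verify that the local skein relations defining the colimit respect the coloring decomposition, and that the accumulated Euler-characteristic twists on each colored sub-surface assemble into exactly the rescaling built into each $\skeinitw$ factor. The second part of Theorem~C then follows by specializing $W=B^4$, where the colimit collapses to ordinary link homology.
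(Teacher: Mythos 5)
Your plan is in the right spirit --- decompose along CRT idempotents on link components, identify each color-summand with a twisted $\gl_{N_i}$ theory via Euler-characteristic rescaling, and push the decomposition from $B^4$ to $W$ --- and your discussion of the twist matches the paper's. But there is a genuine gap in the step you describe as ``the main obstacle is the bookkeeping.'' The claim that ``a filling with classes in the $c$-summand of $\KhR_\Sigma$ is the same datum as an $n$-tuple of fillings in the twisted $\gl_{N_i}$-theory'' is not a bijection of data, and the nontrivial direction is exactly the one you don't address. Going from a single $\Sigma$-deformed filling to $n$ monochromatic ones is easy because the color-homogeneous pieces of an embedded surface are automatically disjoint; but to build the map \emph{from} the tensor product $\bigotimes_i \skeinitw(W, L_{c^{-1}(\lambda_i)}, \C)$ you must take $n$ lasagna fillings of $W$ whose surfaces have no reason to be disjoint and combine them into a single filling. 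Transversality only gives you isolated intersection points between differently colored sheets, and the theory must tell you what to do there.

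The paper's missing ingredient is Example~\ref{example:Hopfsplit} together with Construction~\ref{constr:chromatography}: at each transverse intersection point of an $e(\lambda_i)$-colored sheet with an $e(\lambda_j)$-colored sheet one drills out a small $4$-ball, creating a new input ball decorated with the idempotent-colored positive or negative Hopf link and labelled by the canonical one-dimensional class $\immballpn$. The content of Proposition~\ref{prop:chromatography} is then that this ``chromatography'' map is well-defined (different small isotopies to transversality give $\immballp/\immballn$ pairs that cancel), invertible (the inverse inserts $\immballnp$), and natural in link cobordisms; its naturality square for $W=B^4$ is what ultimately makes the $B^4$-level decomposition transportable to skein modules. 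Without this device your ``same datum'' identification does not descend to a two-sided isomorphism of skein modules, and the colimit argument you sketch cannot get off the ground: the colorings index set and the tensor factors both vary with the input link, so you cannot simply commute the colimit past $\bigoplus_c \bigotimes_i$ without the explicit lasagna-level map. A secondary point: the paper deliberately avoids trying to refine the chain-level proof of \cite{MR3590355} as you suggest, noting that the categorified-quantum-group checks there do not cover cups, caps, and saddles; instead it leans on the already-established functoriality of \cite{MR3877770} plus a scalar correction, which is a cleaner route than re-deriving naturality from scratch.
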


\revadd{
  \begin{rem-nono}
  A key idea in the proof of Theorem C is to use Hopf link homology classes to extend the functoriality of (deformed) $\glN$ link homology to certain immersed cobordisms with idempotent decorations, see Example~\ref{example:Hopfsplit}. In \cite{ren2025khovanovskeinlasagnamodules} an overlapping group of authors uses this idea to establish the functoriality of $\mathfrak{gl}_2$ homology under \emph{singular} foams which allow certain immersion points. Related recent work on functoriality of link homology under immersed cobordisms includes \cite{imori2025cobordismmapskhovanovhomology,carter2025extensionkhovanovhomologyimmersed}. We refer to \cite[\S 3.2]{carter2025extensionkhovanovhomologyimmersed} for a survey of related constructions in the literature.  
  \end{rem-nono}
}

\smallskip

\noindent \textbf{Conventions.} All $4$-manifold in this paper are assumed to be
compact, smooth and oriented. Links are framed and oriented. Surfaces in
$4$-manifolds are assumed to be oriented and \revise{normally framed\footnote{More precisely: normally $1$-framed.}}, with framing
agreeing on the boundary, unless we specifically consider an ``oriented
surface'', in which case no framing or framing-compatibility on the boundary is assumed.

\smallskip

\noindent \textbf{Acknowledgements.} PW would like to thank Ciprian Manolescu, Moritz Miltsch, Qiuyu Ren, and Isaac Sundberg for inspiring discussions concerning generalized Rasmussen invariants and Khovanov--Jacobsson classes. \revadd{We would also like to thank the anonymous referees for their careful reading and helpful comments.}

\smallskip

\noindent \textbf{Funding.}
The authors acknowledge support from the Aspen Center for Physics. PW acknowledges support from the Deutsche
Forschungsgemeinschaft (DFG, German Research Foundation) under Germany's
Excellence Strategy - EXC 2121 ``Quantum Universe'' - 390833306 and the Collaborative Research Center - SFB 1624 ``Higher structures, moduli spaces and integrability''.

\section{Equivariant and deformed skein modules}

\subsection{Equivariant and deformed link homology}

In this article we extend the construction of lasagna skein modules to
\emph{equivariant} and \emph{deformed} cases, building on equivariant and
deformed versions of the $\glN$ link homology constructed using the
Robert--Wagner evaluation of closed foams \cite{1702.04140,MR3877770}. To
explain this terminology, we recall that the ordinary $\glN$ homology of the
unknot is isomorphic\footnote{Up to regrading, this is even an isomorphism of
graded Frobenius algebras with respect to the natural Frobenius algebra
structure on the right-hand side coming from Poincar\'e duality, and on the
left-hand side corresponding to the $2$-dimensional TQFT obtained by restricting
the link homology to standardly embedded unlinks and cobordisms between them. }
to the cohomology ring $H^*(\C P^{N-1})\cong \Z[X]/X^N$. 

In equivariant $\glN$ link homology one instead
considers $\GLN$-equivariant cohomology $H^*_{\GLN}$ or cohomology equivariant
with respect to the maximal torus $T(N)\subset \GLN$. These theories are defined
over the rings:
\[ R_{T(N)}:=H^*_{T(N)} (\mathrm{point}) \cong \Z[r_1,\dots,r_N]\;, \quad R_\GLN:= H^*_\GLN
(\mathrm{point}) \cong \Z[e_1,\dots,e_N]\] 
Here the $r_j$ are classes of degree
$2$ and $e_i$ are classes of degree $2i$ that can be identified with the $i$-th
elementary symmetric polynomial $e_i(r_1,\dots,r_N)$. The corresponding unknot invariants are isomorphic to:
\[\KhR_{T(N)}(\bigcirc)\cong \frac{R_{T(N)}[X]}{(X-r_1)\cdots(X-r_N)} \;,\quad
\KhR_{\GLN}(\bigcirc)\cong \frac{R_\GLN[X]}{(\revise{\sum_{i=0}^N (-1)^{i} e_i
X^{N-i} })} \] For deformed versions, one considers a commutative ring
$\kb$ and a multiset $\defset=\{s_1,\dots, s_N\}$ of scalars in
$\kb$. The $\defset$-deformed homology has as unknot invariant the
(typically not graded, but only filtered) $\kb$-algebra:
\begin{equation}
  \label{eqn:deformedunknot}
  \KhR_{\defset}(\bigcirc)\cong 
  \frac{\kb[X]}{(\revise{\sum_{i=0}^N (-1)^{i} e_i(\defset) X^{N-i} })} 
\end{equation}
\revise{Here $X$ is of degree $2$.}
Observe that $\KhR_{\defset}(\bigcirc)$ can be obtained from
$\KhR_{\GLN}(\bigcirc)$ by tensoring with the $\kb$-$R_\GLN$-bimodule
\begin{equation}\label{eqn:RSigma}
  R_\defset:=(\kb\otimes_\Z R_\GLN)/(e_i(\defset)\otimes 1 -1 \otimes
e_i\mid 1\leq i \leq N)\end{equation}
and the ordinary unknot homology  $\KhR_{N}(\bigcirc)$ can be obtained from
$\KhR_{\GLN}(\bigcirc)$ by tensoring with the $\Z$-$R_\GLN$-bimodule
\[R_N:=\Z \otimes_\Z R_\GLN/(1 \otimes
e_i\mid 1\leq i \leq N)\cong \Z\]

More generally, the link homology constructions in \cite{1702.04140,MR3877770} associate to a
blackboard-framed link diagram $L$ a chain complex $C\KhR_\GLN(L)$ of graded
free $R_\GLN$ modules. Then one defines
\begin{align}
  \KhR_\GLN(L)&:= H_*(C\KhR_\GLN(L)), && \text{a }\Z^2\text{-graded } R_{\GLN}\text{-module}\\
  \KhR_N(L)&:= H_*(R_N\otimes_{R_{\GLN}} C\KhR_\GLN(L) ), && \text{a }\Z^2\text{-graded abelian group}\\
  \KhR_\defset(L)&:= H_*(R_\defset \otimes_{R_{\GLN}} C\KhR_\GLN(L) ), && \text{a }\Z\text{-graded, } \Z\text{-filtered } \kb\text{-module}
\end{align}
and these assignments extend to link homology theories that are functorial under
framed link cobordisms in a sense explained in Theorems~\ref{thm:skeinfromLH}
and \ref{thm:examplesofLH} below. We will refer to them as the
$\GLN$-equivariant link homology, the (ordinary) $\glN$ link homology, and the
$\defset$-deformed link homology respectively. One can also obtain a
$T(N)$-equivariant link homology by extending scalars from $R_{\GLN}$ to the
free $R_{\GLN}$-module $R_{T(N)}$, but we will not need this version in the
present paper.

\subsection{Skein modules of 4-manifolds}
In \cite{MR4562565} we defined skein modules for $4$-manifolds based on the $\glN$
link homology theories. Here we abstract from this particular setting and
present a more general framework, which will accommodate equivariant and
deformed versions of $\glN$ link homology as input. 

\begin{thm}\label{thm:skeinfromLH} Let $R$ be a commutative ring, optionally graded resp. filtered by
an abelian group $\Gamma$. Given a functorial \emph{link homology theory} for
links in $\R^3$ with values in $R$-modules, i.e. a functor:
\[
\begin{Bmatrix}
\textrm{link embeddings in oriented } B\cong \R^3
\\
\textrm{link cobordisms in oriented }
Y \cong \R^3\times \I \textrm{ up to isotopy rel } \bd
\end{Bmatrix}
\xrightarrow{H}
\begin{Bmatrix}
\Z\textrm{-graded, } \Gamma\textrm{-graded/filtered }  R\textrm{-modules}
\\
\Z\textrm{-pres., } \Gamma\textrm{-hom./filt.} R\textrm{-morphisms}
\end{Bmatrix}
\]
which additionally
\begin{enumerate}
  \item is (lax) monoidal under disjoint union: $H(L_1)\otimes_R H(L_2)\to H(L_1 \sqcup L_2)$, and
  \item satisfies the sweep-around move \cite[(1.1)]{MR4562565}, and
  \item the trace of the $2\pi$ rotation of $\R^3$, which generates
  $\pi_1(SO(3))$, acts by the identity on $H$, 
\end{enumerate}
then $H$ extends to:
\begin{itemize}
  \item a link homology for links in $3$-spheres \cite[Definition 4.8]{MR4562565} with
  values in $\Z$-graded, $\Gamma$-graded/filtered $R$-modules,
  \item an algebra for the lasagna operad \cite[Section 5.1]{MR4562565} valued in
  $\Z$-graded, $\Gamma$-graded/filtered $R$-modules,
  \item a $(4+\epsilon)$-dimensional TQFT of skein modules associated to pairs
  $(W,L)$ of oriented $4$-manifolds $W$ with links $L\subset \partial W$, valued
  in  $\Z$-graded, $\Gamma$-graded/filtered $R$-modules as in \cite[Section 5.2]{MR4562565}.
\end{itemize}
\revise{Here the morphisms of the target category of $H$ are given by $R$-linear maps that preserve the $\Z$-grading and are homogeneous resp. filtered with respect to the $\Gamma$-grading.}
\end{thm}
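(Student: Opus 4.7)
The plan is to adapt the construction of \cite{MR4562565} step by step, verifying at each stage that only the abstract properties (1)--(3) listed in the statement are used, and that the $\Gamma$-grading/filtration is preserved throughout. The key observation is that the constructions in \cite{MR4562565} were originally formulated for the specific case of $\glN$ link homology with $R=\Z$, but the proofs there are essentially formal: they invoke only monoidality, sweep-around invariance, and triviality of the $\pi_1(SO(3))$-action, which are precisely the axioms collected in this theorem.

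For the first extension (links in abstract oriented 3-spheres $S$), I would choose a point $p\in S$ disjoint from $L$, pick an orientation-preserving identification $S\setminus\{p\}\cong \R^3$, and set $H(S,L):=H(L\subset \R^3)$. Independence of the choice of $p$ follows from axiom (2): moving $p$ along a loop in $S\setminus L$ corresponds to sweeping a strand around the ``point at infinity'', which by the sweep-around move acts as the identity. Independence of the rotational identification reduces to the $\pi_1(SO(3))$-action, which is trivial by axiom (3). Monoidality (axiom 1) then supplies coherence with disjoint unions and with gluings along embedded 2-spheres, upgrading $H$ to a functor on links and cobordisms in abstract 3-spheres and cylinders on them.

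For the second extension (lasagna operad), a lasagna input consisting of nested balls $B_1,\dots,B_k\subset B$ with boundary links $L_i\subset \bd B_i$, $L\subset \bd B$, together with a cobordism $\Sigma$ in $B\setminus\bigsqcup \mathrm{int}(B_i)$ from $\sqcup L_i$ to $L$, determines an $R$-linear map $\bigotimes_i H(L_i)\to H(L)$ by composing the monoidality structure map with the cobordism-induced morphism $H(\sqcup L_i)\to H(L)$. The operad axioms (unit, equivariance, composition) follow from functoriality and the coherence of the monoidal structure. For the third extension, $\skein(W,L)$ is defined as a colimit over all lasagna fillings of $W$ with fixed boundary $L$, modulo the local relation that any embedded sub-ball can be evaluated using the link homology. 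The tri-grading by $H_2(W,L)\times \Z\times \Gamma$ arises respectively from the relative homology class carried by $\Sigma$, the homological degree, and the $\Gamma$-grading/filtration inherited from $H$.

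The main obstacle will be bookkeeping the $\Gamma$-grading or filtration, particularly in the filtered case where natural cobordism maps may only be filtered rather than graded. One must verify that every construction descends to the category of $\Gamma$-filtered $R$-modules with filtered morphisms, and that the local relations defining the skein module respect the filtration. Extra care is needed at the abstract-3-sphere step to produce canonical (not merely up-to-unique-isomorphism) identifications, since the downstream skein module is a colimit that requires strict coherence data on its generators; this is where the combined use of axioms (2) and (3) is essential, since each alone would leave residual ambiguity corresponding to a nontrivial element of $\pi_1(\mathrm{Diff}^+(S^3))\cong \Z/2$.
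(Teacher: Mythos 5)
Your proposal is correct and follows essentially the same three-stage route the paper takes: first extend $H$ from $\R^3$ to abstract oriented $3$-spheres using the sweep-around move and triviality of the $\pi_1(SO(3))$-action; then assemble a lasagna operad algebra from monoidality plus cobordism maps; then pass to the skein module by imposing the local evaluation relation. One small misallocation worth noting: the paper uses requirement (2), the sweep-around move, again at the lasagna-operad stage (not only for the $S^3$ extension), because an isotopy of a lasagna diagram can sweep one input ball around a strand of the filling surface, and well-definedness of the operad action requires this to act trivially; your appeal to ``functoriality and coherence of the monoidal structure'' alone does not quite cover this point unless you make explicit that the cobordism maps in the operad step are the ones already defined on links in abstract $3$-spheres, into which the sweep-around has been baked.
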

\begin{proof} The proof follows the main narrative of \cite{MR4562565}, abstracted
from $\glN$ homology and with coefficients generalized from $\Z$ to $R$. First,
the requirements 2 and 3 are used to extend $H$ to a functorial invariant of
links in $S^3$. Second, requirements 1 and 2 are used to extend $H$ to an
algebra for the lasagna operad, which associates maps to surfaces embedded in
the complement of $4$-balls inside a larger $4$-ball. Finally, the lasagna
algebra is used to define the skein relations in the $R$-linear skein modules
associated to $(W,L)$. 
\end{proof}

\begin{theorem}\label{thm:examplesofLH} The requirements of Theorem~\ref{thm:skeinfromLH} are satisfied for:
  \begin{itemize}
    \item The $\GLN$-equivariant link homology $\KhR_{\GLN}$ with
    $\Gamma=\Z$ and graded ring $R_\GLN$.
    \item The $\glN$ link homology $\KhR_N$ with $\Gamma=\Z$ and graded ring
    (concentrated in degree zero) $R_N\cong\Z$.
    \item The $\defset$-deformed link homology $\KhR_\defset$ with $\Gamma=\Z$ and
    filtered ring (concentrated in degree zero) $R_\defset$.
  \end{itemize}
\end{theorem}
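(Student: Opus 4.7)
The structural observation is that all three theories arise from the same underlying complex $C\KhR_\GLN(L)$ of graded free $R_\GLN$-modules by base change along ring maps $R_\GLN \to R_N$ or $R_\GLN \to R_\defset$. My plan is therefore to verify the four hypotheses of Theorem~\ref{thm:skeinfromLH} at the universal level for $\KhR_\GLN$, and then to obtain the statements for $\KhR_N$ and $\KhR_\defset$ by a naturality argument, observing that base change $M \mapsto M \otimes_{R_\GLN} (-)$ preserves all of the relevant structure (monoidality, identity-action of the $2\pi$-rotation, and the particular chain-level identity encoding the sweep-around move), since it is strong monoidal between the respective categories of $R$-modules and the hypotheses are expressed entirely through composition, tensor product, and identities of morphisms.

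For the equivariant theory $\KhR_\GLN$ itself, I would invoke the Robert--Wagner foam evaluation of \cite{1702.04140,MR3877770}, which already produces functorial invariants of framed link cobordisms in $\R^3 \times I$ valued in $\Z$-graded $R_\GLN$-modules. Monoidality under disjoint union is inherited from the foam category: the canonical map $C\KhR_\GLN(L_1) \otimes_{R_\GLN} C\KhR_\GLN(L_2) \to C\KhR_\GLN(L_1 \sqcup L_2)$ is an isomorphism on the nose of complexes of graded free $R_\GLN$-modules, since the hypercube of resolutions for $L_1 \sqcup L_2$ factors as a tensor product of the hypercubes for $L_1$ and $L_2$ and Robert--Wagner evaluation of closed foams is multiplicative on disjoint unions. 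Passing to homology yields the required lax monoidal structure (which is in fact strong when one factor consists of free modules).

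The main obstacle is the sweep-around move, since in \cite{MR4562565} the proof for $\KhR_N$ proceeded by a careful degree and rank argument specific to ungraded $\Z$-coefficients. I would argue that the same chain-level identity already holds for $\KhR_\GLN$. The original proof reduces the sweep-around move to a morphism in the stable homotopy category of complexes of graded free $R_{\GLN}$-modules, which becomes the identity because all the spaces of morphisms in the relevant degree are free $R_\GLN$-modules of rank one, where the relevant equality can be detected after specialization $R_\GLN \twoheadrightarrow R_N \cong \Z$. Concretely, since the $\glN$ theory satisfies the sweep-around move by \cite{MR4562565} and $C\KhR_\GLN$ is a complex of free $R_\GLN$-modules whose specialization under $R_\GLN \to R_N$ computes $C\KhR_N$, the difference of the two sides of the sweep-around equality in $\KhR_\GLN$ maps to zero modulo the augmentation ideal $(e_1, \dots, e_N)$; a hom-space dimension count (using that equivariant link homology is a free module of the correct graded rank on Hopf-type tangles) then forces the difference itself to vanish.

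Finally, the trivial action of the $2\pi$-rotation of $\R^3$ is a Reidemeister-I-type statement and follows directly from the framing-compatible functoriality of the equivariant foam TQFT, since the rotation is realized by a cobordism that is a disjoint union of twisted annuli, and the Robert--Wagner evaluation of such foams is the identity. With these four conditions established for $\KhR_\GLN$, the deformed and ordinary cases then follow by applying $R_\defset \otimes_{R_\GLN} -$ and $R_N \otimes_{R_\GLN} -$ respectively to the chain-level structures; the resulting functors inherit monoidality, the sweep-around identity, and the triviality of the $2\pi$-rotation, which completes the proof.
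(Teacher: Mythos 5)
Your overall strategy — establish the four hypotheses for $\KhR_{\GLN}$ at the universal level and then deduce $\KhR_N$ and $\KhR_\defset$ by base change along $R_{\GLN}\to R_N$ and $R_{\GLN}\to R_\defset$ — is a genuinely different organization from the paper's. The paper instead argues that the proof given in \cite{MR4562565} for $\KhR_N$ carries over \emph{mutatis mutandis} to the equivariant and deformed cases, noting (as you also do) that the complexes are free, that requirement 3 on the $2\pi$-rotation is automatic because foams are taken up to isotopy rel boundary, and that the sweep-around proof from \cite[Section 3]{MR4562565} applies verbatim except for a harmless change in a polynomial expression in the Reidemeister I chain maps. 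Your arguments for monoidality and for the $2\pi$-rotation triviality match the paper in substance, and your observation that base change is strong monoidal and preserves chain-level homotopies correctly handles the downward direction once the equivariant case is in hand.

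The gap is in your treatment of the sweep-around move for $\KhR_{\GLN}$. You propose to deduce it from the known $\glN$ case by a specialization argument: the two sides of the sweep-around equality agree after $R_{\GLN}\twoheadrightarrow R_N\cong\Z$, and you claim a ``hom-space dimension count'' showing the relevant morphism space is a free $R_{\GLN}$-module of rank one, which would force the difference to vanish. But vanishing modulo the augmentation ideal $(e_1,\dots,e_N)$ does not in general force vanishing: if the hom-space has a free generator in a degree at least $2$ below the degree of the sweep-around morphism, then a nonzero element of $(e_1,\dots,e_N)$ times that generator would survive the dimension count while still dying under specialization. So your argument requires a nontrivial claim about the graded rank of a specific $\operatorname{Hom}$-space in the homotopy category of equivariant complexes, and that claim is neither established nor obviously true (such endomorphism spaces of cabled-unknot complexes are typically large). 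The paper sidesteps this entirely: the original proof of the sweep-around move in \cite{MR4562565} works with explicit foams and polynomial coefficients, and those lift to the equivariant setting with only cosmetic changes, so there is nothing to ``detect by specialization.'' If you want to keep the base-change-from-$\GLN$ architecture, you would still need to re-run the original chain-level argument to get the equivariant sweep-around move in the first place; at that point the specialization step buys nothing.
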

\begin{proof}
  The proof for the ordinary $\glN$ link homology was given in
  \cite{MR4562565} and the other cases follow mutatis mutandis. The
  functoriality of the $\GLN$-equivariant and deformed link homologies in
  $\R^3$, constructed via the Robert--Wagner foam evaluation \cite{1702.04140}
  was proven in \cite{MR3877770}. Two important properties are manifestly
  visible in this construction. First, the link homologies are computed as
  homologies of chain complexes of graded free modules over the respective
  ground ring $R$ (in the case of the deformed homology: with filtered
  differentials) \cite[Remark 3.30]{1702.04140}, which implies requirement 1 in
  Theorem~\ref{thm:skeinfromLH}. 

  Second, the complexes associated to link diagrams, which compute the link
  homologies under consideration, have chain modules that are spanned by foams
  from the empty web to various closed webs obtained from resolving the link
  diagram, and the differentials are given by postcomposition by other foams.
  Since these foams are considered up to isotopy relative to the boundary,
  requirement 3 in Theorem~\ref{thm:skeinfromLH} is automatically satisfied.
  
  The final and only substantial requirement is the sweep-around move, which was
  proved for the ordinary $\glN$ link homology in \cite[Section
  3]{MR4562565}. The proof immediately generalizes to the other cases;
  the only visible, although immaterial difference is in a polynomial expression
  describing the action of the Reidemeister I chain maps in \cite[Lemma
  3.9]{MR4562565}. 
\end{proof}

\begin{remark}[Remark on grading conventions]
  \label{rem:gradconv}
In \cite{MR4562565} we have normalized the $\glN$ link homology so
  that a framing change of $\pm 1$ on a link corresponds to a shift $q^{\pm 1
  \mp N}$. This can be computed from the local crossing relation:
  \begin{equation}
    \label{eqn:crossingsold}
    \CC{\begin{tikzpicture}[anchorbase,scale=.2]
      \draw [very thick, ->] (1,-2)to(1,-1.7) to [out=90,in=270] (-1,1.7) to (-1,2);
      \draw [white, line width=.15cm] (-1,-2) to (-1,-1.7) to [out=90,in=270] (1,1.7) to (1,2);
      \draw [very thick, ->] (-1,-2) to (-1,-1.7) to [out=90,in=270] (1,1.7) to (1,2);
    \end{tikzpicture}
    }
     \quad = \quad
    q\;
    \begin{tikzpicture}[anchorbase,scale=.2]
      \draw [very thick, ->] (1,-2)to(1,-1.7) to [out=90,in=315] (0,-.5) (0,.5) to [out=45,in=270] (1,1.7) to (1,2);
      \draw[double] (0,.5) to (0,-.5);
      \draw [very thick, ->] (-1,-2) to(-1,-1.7) to [out=90,in=225] (0,-.5) (0,.5) to [out=135,in=270] (-1,1.7) to (-1,2);
    \end{tikzpicture}
    \to
    \uwave{
    \begin{tikzpicture}[anchorbase,scale=.2]
      \draw [very thick, ->] (1,-2) to (1,2);
      \draw [very thick, ->] (-1,-2) to (-1,2);
    \end{tikzpicture}
    }
    , \qquad
    \CC{\begin{tikzpicture}[anchorbase,scale=.2]
      \draw [very thick, ->] (-1,-2) to (-1,-1.7) to [out=90,in=270] (1,1.7) to (1,2);
      \draw [white, line width=.15cm] (1,-2)to(1,-1.7) to [out=90,in=270] (-1,1.7) to (-1,2);
      \draw [very thick, ->] (1,-2)to(1,-1.7) to [out=90,in=270] (-1,1.7) to (-1,2);
    \end{tikzpicture}
    }
     \quad = \quad
    \uwave{
    \begin{tikzpicture}[anchorbase,scale=.2]
      \draw [very thick, ->] (1,-2) to (1,2);
      \draw [very thick, ->] (-1,-2) to (-1,2);
    \end{tikzpicture}
    }
    \to
    q^{-1}\;
    \begin{tikzpicture}[anchorbase,scale=.2]
      \draw [very thick, ->] (1,-2)to(1,-1.7) to [out=90,in=315] (0,-.5) (0,.5) to [out=45,in=270] (1,1.7) to (1,2);
      \draw[double] (0,.5) to (0,-.5);
      \draw [very thick, ->] (-1,-2) to(-1,-1.7) to [out=90,in=225] (0,-.5) (0,.5) to [out=135,in=270] (-1,1.7) to (-1,2);
    \end{tikzpicture}
     \end{equation}
  The corresponding skein modules of $4$-manifolds were denoted $\skein(W;L)$ in
  \cite{MR4562565}. 

  In the present paper it is more convenient (see Remark~\ref{rem:whyconv}) to
  work with a global regrading using the local crossing relation:
  
  \begin{equation}
    \label{eqn:crossingsnew}
    \CC{\begin{tikzpicture}[anchorbase,scale=.2]
      \draw [very thick, ->] (1,-2)to(1,-1.7) to [out=90,in=270] (-1,1.7) to (-1,2);
      \draw [white, line width=.15cm] (-1,-2) to (-1,-1.7) to [out=90,in=270] (1,1.7) to (1,2);
      \draw [very thick, ->] (-1,-2) to (-1,-1.7) to [out=90,in=270] (1,1.7) to (1,2);
    \end{tikzpicture}
    }_\fr
     \quad = \quad
     \uwave{
    \begin{tikzpicture}[anchorbase,scale=.2]
      \draw [very thick, ->] (1,-2)to(1,-1.7) to [out=90,in=315] (0,-.5) (0,.5) to [out=45,in=270] (1,1.7) to (1,2);
      \draw[double] (0,.5) to (0,-.5);
      \draw [very thick, ->] (-1,-2) to(-1,-1.7) to [out=90,in=225] (0,-.5) (0,.5) to [out=135,in=270] (-1,1.7) to (-1,2);
    \end{tikzpicture}
    }
    \to
      q^{-1}\;
    \begin{tikzpicture}[anchorbase,scale=.2]
      \draw [very thick, ->] (1,-2) to (1,2);
      \draw [very thick, ->] (-1,-2) to (-1,2);
    \end{tikzpicture}
    , \qquad
    \CC{\begin{tikzpicture}[anchorbase,scale=.2]
      \draw [very thick, ->] (-1,-2) to (-1,-1.7) to [out=90,in=270] (1,1.7) to (1,2);
      \draw [white, line width=.15cm] (1,-2)to(1,-1.7) to [out=90,in=270] (-1,1.7) to (-1,2);
      \draw [very thick, ->] (1,-2)to(1,-1.7) to [out=90,in=270] (-1,1.7) to (-1,2);
    \end{tikzpicture}
    }_\fr
     \quad = \quad
     q\;
    \begin{tikzpicture}[anchorbase,scale=.2]
      \draw [very thick, ->] (1,-2) to (1,2);
      \draw [very thick, ->] (-1,-2) to (-1,2);
    \end{tikzpicture}
    \to
    \uwave{
    \begin{tikzpicture}[anchorbase,scale=.2]
      \draw [very thick, ->] (1,-2)to(1,-1.7) to [out=90,in=315] (0,-.5) (0,.5) to [out=45,in=270] (1,1.7) to (1,2);
      \draw[double] (0,.5) to (0,-.5);
      \draw [very thick, ->] (-1,-2) to(-1,-1.7) to [out=90,in=225] (0,-.5) (0,.5) to [out=135,in=270] (-1,1.7) to (-1,2);
    \end{tikzpicture}
    }
     \end{equation}
In these conventions, a framing shift of $\pm 1$ on a link corresponds to a
shift of $ q^{\mp N} t^{\pm 1}$ (when using the convention that the differential
raises the $t$-degree). Since the two associated versions of $\glN$ homology
differ by a grading shift, the version based on \eqref{eqn:crossingsnew}
inherits the same functoriality properties under \emph{framed} link cobordisms
and hence also give rise to skein modules for $4$-manifolds as described in
Theorem~\ref{thm:skeinfromLH}. To distinguish the associated link homologies and
skein modules for $4$-manifolds from their counterparts defined using the other
convention, we will mark them with a subscript $\fr$, e.g. $\KhR_{\GLN, \fr}$,
$\fskein(W;L)$. 

The reader be warned
that the papers \cite{2020arXiv200908520M,
MR4589588} give formulas to compute
$\skein(W;L)$,
\revadd{rather than $\fskein(W;L)$,} from a handle decomposition. Analogous formulas for $\fskein(W;L)$
can be obtained after regrading appropriately.  
\end{remark}

We are now ready to set notation for the $4$-manifold skein modules associated
with the equivariant, the ordinary, and the deformed $\glN$ link homologies with
respect to the grading conventions from Remark~\ref{rem:gradconv}.

\begin{definition} Let $W$ be a $4$-manifold and $L\subset \partial W$ a link.
For $N\geq 1$, a commutative ring $\kb$ and a multiset $\defset$ of $N$
elements of $\kb$ we denote by
\begin{itemize}
  \item $\skeine(W;L)$, the skein module of $(W,L)$ constructed from $\KhR_{\GLN, \fr}$,
  \item  $\fskein(W;L)$, the skein module of $(W,L)$ constructed from $\KhR_{N, \fr}$,
  \item $\skeins(W;L)$, the skein module of $(W,L)$ constructed from $\KhR_{\defset, \fr}$.
\end{itemize}
via Theorems~\ref{thm:skeinfromLH} and \ref{thm:examplesofLH} using the grading
conventions of Remark~\ref{rem:gradconv}.
\end{definition}

For the readers convenience, we briefly recall the explicit description of these
skein modules. More details appear in \cite[Section 5.2]{MR4562565}.
For $\bullet\in \{\GLN, N, \defset\}$ the associated skein module is defined as a
quotient of a homologically $\Z$-graded and quantum $\Z$-graded/filtered free
$R_\bullet$-module:

\[
	\skeingen(W; L) := R_\bullet\{\text{lasagna fillings } F \text{ of } W \text{ with boundary } L\}/\sim
\]
spanned by \emph{lasagna fillings}, which consist of 
	\begin{itemize}
	\item a finite collection of `small' 4-balls $B_i$ embedded in the interior of $W$;
	\item a framed oriented surface $\surf$ properly embedded in $X \setminus \sqcup_i B_i$, 
	meeting $\bd W$ in $L$ and
	meeting each $\bd B_i$ in a link $L_i$; and
	\item for each $i$, a homogeneous label $v_i \in \KhR_\bullet(\bd B_i, L_i)$.
	\end{itemize}
Lasagna fillings are considered multilinear in the input labels $v_i$ and up to
the equivalence relation $\sim$ which is the linear and transitive closure of
the relation generated by declaring two lasagna fillings $F_1,F_2$ equivalent,
in symbols $F_1\sim F_2$, if $F_1$ has an input ball $B_1$ with label $v_1$, and
$F_2$ can be obtained from $F_1$ by replacing $B_1$ with a third lasagna filling
$F_3$ of a 4-ball such that the label $v_1$ is computed as the lasagna algebra
evaluation of $F_3$, followed by an isotopy rel boundary. This relation is
illustrated in Figure~\ref{fig:lasagnafillingequiv}, reproduced from
\cite{MR4562565}.

\begin{figure}[ht]
	\[
	\lasagnafillingfigure{.15}
	\]
	\caption{}
	\label{fig:lasagnafillingequiv}
\end{figure}

The skein module $\skeingen(W; L)$ is graded by $H_2(W\revise{,} L)\times \Z_t$ and
additionally $\Z_q$\revise{-}graded/filtered. A lasagna filling $F$ with input labels
$v_i$ and underlying surface $\surf$ has $\Z_q\times \Z_t$ bidegree 
\[
  \deg_{q,t}(F) := \left((1-N) \chi(\surf), 0 \right) + \sum_{i} \deg_{q,t}(v_i)\in \Z_q\times \Z_t
\]
and $H_2(W\revise{,} L)$-degree given by the homology class of the surface obtained by
capping off $\surf$ by Seifert surfaces for the input links $L_i$, which by
abuse of notation we denote by $[\surf]$. 

As mentioned in the introduction,
with regard to the grading by relative second homology, the skein module
$\skeingen(W; L)$ is concentrated in the $H_2(W)$-torsor
$H_2(W)^L:=\delta^{-1}([L])\subset H_2(W\revise{,} L)$ of the relative second homology
classes in the preimage of the fundamental class of the link under the
connecting homomorphism in the long exact sequence for relative homology.

\subsection{Skeins from surfaces}
\label{sec:skeinsfromsurfaces}

\begin{definition}[Framing-changing input balls]
  The $\glN$ homology of the unknot with framing $+1$ is $N$-dimensional,
  concentrated in bidegrees $t q^{-2N+1+2i}$ for $0\leq i \leq N-1$. We fix the
  bidegree $t q^{-2N+1}$ class corresponding to the image of the disk-class
  under the usual Reidemeister I chain map and denote it $\frameballp$. An input sphere
  in a lasagna filling, whose link is the positively framed unknot decorated
  with the class $\frameballp$ will be called a \emph{positive framing-changing
  input ball}. Analogously we define negative framing-changing input balls of
  bidegree $t^{-1} q^{2N-1}$. The relations in the skein modules $\fskein(W;L)$
  ensure that positive and negative framing-changing input balls on the same
  component of a lasagna sheet cancel.
  \end{definition}
  
  \begin{definition}[Skein classes from surfaces]
    \label{def:skeinfromsurface}
  For an oriented surface $\surf$ in $(W,L)$ we denote by $[\surf]\cdot[\surf]$  
  the self-intersection of $\surf$ with respect to the push-off along the
  boundary given by the framing on $L$. As the notation suggests, this only depends on
  the relative second homology class $[\surf]\in H_2(W)^L \subset H_2(W\revise{,} L)$ and the given
  framing of the link $L$. By removing a suitable number of disks $\sqcup_{i_j} D^2$
  from \revise{each connected component $\surf_j$ of }$\surf$ and filling them with framing-changing input balls of total
  framing change $\revise{[\surf_j]\cdot[\surf_j]}$, we can arrange the complement
  $\surf\setminus{\sqcup_{\revise{j,i_j}} D^2}$ to \revise{admit} a normal framing compatible with the
  framing of $L$ and the framed unknots on the framing-changing input balls. \revise{For any choice of positioning of these input balls and normal framing of the complement, we thus} obtain a class in $\fskein(W;L)$ that we again denote by $\surf$\revise{. Indeed, this class is well-defined and independent of the choices made since framing-changing input balls can be isotoped and cancelled when appearing with opposite sign on a lasagna sheet, and the possible compatible framings of the complement surface are related by these operations. The
  tridegree of the class $\surf$} is $([\surf],\deg_q(\surf),\deg_t(\surf))\in H_2(W,L)\times
  \Z\times \Z$ where
  \begin{align*}
    \deg_{t}(\surf) &:= [\surf]\cdot[\surf]\, ,\\
    \deg_q(\surf)&:= (1-N)\chi(\surf) - N [\surf]\cdot[\surf]\, .
  \end{align*}
   Similarly, a surface $\surf$ also determines a class in any deformed skein
  module, which will only have a bidegree $([\surf],\deg_t(\surf))$.
  \end{definition}
  
  \begin{example} The $2$-sphere $\C P^1 \subset \C P^2$ has self-intersection
    $1$, hence does not represent an element of the skein module $\fskein(\C
    P^2;\emptyset)$ on its own. After inserting a framing-changing input ball, we
    obtain a lasagna filling representing a class of tridegree $([\C P^1], 2-3N,
    1)$. On the other hand, the $2$-sphere $\C P^1 \subset \overline{\C P^2}$ has
    self-intersection $-1$, hence we obtain a class of tridegree $([\C P^1], 2-N,
    -1)$. 
  \end{example}

\section{Non-vanishing and genus bounds}

\subsection{Decomposing deformed skein modules}
In this section we fix as multiset $\defset$ of complex numbers $\lambda_1,\dots,
\lambda_n$ (assumed to be pairwise distinct) with multiplicies $N_i$ summing to
$N=\sum_i N_i$. We will use the notation $\defset=\{\lambda_1^{N_1},\dots
\lambda_n^{N_n}\}$ for this multiset.

Following \eqref{eqn:deformedunknot} the
$\defset$-deformed unknot homology over $\kb=\C$ is the $\C$-algebra

\begin{align*}  
  \KhR_{\defset}(\bigcirc)\cong 
\frac{\C[X]}{\prod_{i=1}^n (X-\lambda_i)^{N_i}}
\cong
\prod_{i=1}^n \frac{\C[X]}{(X-\lambda_i)^{N_i}}
\end{align*}
according to the Chinese Remainder Theorem. The factors can again be identified
with $\gl_{N_i}$ unknot homologies (with the $q$-grading forgotten). Let
$e(\lambda_i)\in \KhR_{\defset}(\bigcirc)$ denote the idempotent projecting onto
the $\lambda_i$-summand. In the skein module $\skeins$, we may interpret
$e(\lambda_i)$ as the label on an input ball in a lasagna filling $F$,
intersecting the surface of $F$ in an unknot. Since the $e(\lambda_i)$ are
idempotent, the location and number (as long as it is positive) of such local
decorations on a component of the surface in $F$ is immaterial, so we consider
the entire component as $\lambda_i$-colored.

\begin{definition}Let $W$ be a $4$-manifold and $L\subset \partial W$ a link in the boundary.
  \begin{itemize}
   \item We denote by $C(L,\defset)=\{\lambda_1,\dots \lambda_n\}^{\pi_0(L)}$ the
  \emph{colorings} of the component of $L$ by elements of $\defset$. 
  \item For a fixed coloring $c\in C(L,\defset)$ we denote by
  $L_{c^{-1}(\lambda_i)}$ the \emph{color-homogeneous sublink} of $L$ given by
  the union of those components of $L$ that are colored $\lambda_i$ by $c$.
  \item For a fixed coloring $c\in C(L,\defset)$ we will denote by
  $\skeins(W;c(L))$ the $H_2(W\revise{,} L)\times \Z_t$-graded $\C$-vector subspace of
  $\skeins(W;L)$ spanned by lasagna fillings whose underlying surface has
  components $\lambda_i$-colored whenever they bound a component of $L$ that is
  $\lambda_i$-colored by $c$.
  \item In the special case of $W=B^4$, we have an isomorphism
  \begin{equation}
    \label{eqn:B4id}
   \phi\colon \KhR_\defset(L) \xrightarrow{\cong}  \skeins(B^4;L)
  \end{equation}
    as in \cite[Example 5.6]{MR4562565}, which sends a class $v\in
    \KhR_\defset(L)$ to the lasagna filling of $B^4$ given by a single
    $L$-decorated concentric input ball with label $v$ and radial surface
    $L\times I$. For the coloring $c$ we then define $\KhR_\defset(c(L)):=
    \phi^{-1}(\skeins(B^4;c(L)))$, so that $\phi$ restricts to an isomorphism:
    \begin{equation}
      \label{eqn:B4idc}
      \KhR_\defset(c(L)) \cong \skeins(B^4;c(L))
    \end{equation}
   \end{itemize}
\end{definition}

\begin{lemma}
  \label{lem:onecolattime}
  Let $W$ be a $4$-manifold and $L\subset \partial W$ a link in the boundary.
  Then we have a decomposition of $H_2(W\revise{,} L)\times \Z_t$-graded $\C$-vector spaces:
  \[
    \skeins(W;L) \cong \bigoplus_{c\in C(L,\defset)}  \skeins(W;c(L))
    \]
and in the case of $W=B^4$:
\[
  \KhR_\defset(L) \cong \bigoplus_{c\in C(L,\defset)}  \KhR_\defset(c(L))
\]
\end{lemma}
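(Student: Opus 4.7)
The strategy is to exploit the orthogonal idempotent decomposition
\[
1 \;=\; \sum_{i=1}^n e(\lambda_i) \;\in\; \KhR_\defset(\bigcirc),
\qquad e(\lambda_i)\,e(\lambda_j) = \delta_{ij}\, e(\lambda_i),
\]
coming from the Chinese Remainder factorization
$\KhR_\defset(\bigcirc) \cong \prod_i \C[X]/(X-\lambda_i)^{N_i}$, and to promote these idempotents to orthogonal idempotents in an action of $\KhR_\defset(\bigcirc)^{\otimes \pi_0(L)}$ on $\skeins(W;L)$.

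First I would define, for each component $K \in \pi_0(L)$ and each $v \in \KhR_\defset(\bigcirc)$, an operator $v_K$ on $\skeins(W;L)$ as follows: given a lasagna filling $F$ with underlying surface $\surf$, pick a point on the surface component of $\surf$ meeting $K$ near the boundary and insert a small input ball there whose link is a small unknot on the surface labeled by $v$. The skein relations in $\skeins(W;L)$, together with the fact that an input ball supported on a small unknot on a surface sheet can be slid along that sheet (via a lasagna filling of a $4$-ball implementing the isotopy), show that $v_K$ is well defined and that operators associated to different components of $L$ and different labels in $\KhR_\defset(\bigcirc)$ commute and respect the algebra structure. This produces the desired module structure $\KhR_\defset(\bigcirc)^{\otimes \pi_0(L)} \curvearrowright \skeins(W;L)$, which is $H_2(W;L) \times \Z_t$-graded because the idempotents lie in tridegree $0$.

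Next, for each coloring $c\in C(L,\defset)$ set
\[
E_c \;:=\; \prod_{K\in\pi_0(L)} e(c(K))_K.
\]
These are pairwise orthogonal idempotents summing to $1$. Hence
\[
\skeins(W;L) \;=\; \bigoplus_{c\in C(L,\defset)} E_c \cdot \skeins(W;L),
\]
as $H_2(W;L)\times\Z_t$-graded $\C$-vector spaces. It remains to identify $E_c\cdot \skeins(W;L)$ with the subspace $\skeins(W;c(L))$ defined by the span condition. One inclusion is immediate: if $F$ is a filling whose surface components meeting $L$ carry the $\lambda_i$-coloring prescribed by $c$, then inserting an extra idempotent $e(c(K))$ near $K$ is absorbed by the existing idempotent on that component (using $e(\lambda_i)^2 = e(\lambda_i)$), so $E_c \cdot F = F$. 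For the other inclusion, observe that $E_c \cdot F$ is by construction a filling in which every surface component touching a $\lambda_i$-colored component of $L$ carries an $e(\lambda_i)$-decorated input ball near the boundary; this decoration can then be absorbed into the surface component's coloring, placing the filling in the span defining $\skeins(W;c(L))$.

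The case $W = B^4$ follows by transport through the isomorphism \eqref{eqn:B4id}: under $\phi$, the action of $v_K$ corresponds to the standard module action of $\KhR_\defset(\bigcirc)$ on $\KhR_\defset(L)$ obtained by dotting the $K$-component, and $\KhR_\defset(c(L)) := \phi^{-1}(\skeins(B^4;c(L)))$ by definition, so the general decomposition restricts to the stated one for $\KhR_\defset(L)$. The main subtlety is the well-definedness of the module structure, i.e.\ checking that an idempotent input ball can be slid freely along a surface sheet; this is the only step requiring a genuine appeal to the lasagna skein relations, and it is exactly analogous to the sheet-mobility arguments used throughout \cite{MR4562565}.
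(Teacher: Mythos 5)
Your proof is correct and takes essentially the same approach as the paper. The paper's proof is a one-sentence observation that any disk in a lasagna sheet can be replaced by the sum $\sum_i e(\lambda_i)$ of unknot input balls, so every sheet splits into a sum of colored components; your argument makes this precise by packaging the idempotent insertions as an action of $\KhR_\defset(\bigcirc)^{\otimes \pi_0(L)}$ on $\skeins(W;L)$ and decomposing via the orthogonal idempotents $E_c$, which is a clean and slightly more explicit formulation of the same underlying mechanism. (One small wording caveat: in the deformed theory the $q$-direction is only filtered, not graded, but this is irrelevant here since the lemma only concerns the $H_2(W;L)\times\Z_t$-grading, which the idempotent insertions manifestly preserve.)
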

\begin{proof}
This is a direct consequence of the definitions and the fact that any disk in the
lasagna sheet can be replaced by a sum of unknot input balls with labels
$e(\lambda_i)$ for $1\leq i \leq n$, i.e. any sheet splits into the sum of its
colored components. \revise{This shows that the left-hand sides are sums as indicated on the right-hand sides. The directness of the sums follows from the orthogonality of the idempotents $e(\lambda_i)$. In fact, projections onto different summands can be obtained by gluing on collars $\partial W\times I$ with idempotent-decorated identity cobordisms $L\times I$.}
\end{proof}

Our goal is to decompose the summands $\skeins(W;c(L))$ further into tensor
products of $\gl_{N_i}$ skein modules for the sublinks $L_{c^{-1}(\lambda_i)}$:

\begin{theorem}\label{thm:defskein} Let $W$ be a $4$-manifold and $L\subset
  \partial W$ a link in the boundary. For any multiset
  $\defset=\{\lambda_1^{N_1},\dots \lambda_n^{N_n}\}$ of complex deformation
  parameters we have an isomorphism of $H_2(W\revise{,} L)\times \Z_t$-graded $\C$-vector
  spaces:

\begin{equation}
  \label{eqn:chromatographyskein}
\skeins(W;L) \cong \bigoplus_{c\in C(L,\defset)} \bigotimes_{i=1}^n \skeinitw(W; L_{c^{-1}(\lambda_i)}, \C)   
\end{equation}
where $\skeinitw(W; L_{c^{-1}(\lambda_i)}, \C)$ is isomorphic to the $\gl_{N_i}$
skein module $\skeini(W; L_{c^{-1}(\lambda_i)}, \C)$ with the $q$-grading forgotten.
\end{theorem}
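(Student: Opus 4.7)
The plan is to use Lemma \ref{lem:onecolattime} to reduce to a fixed coloring and then build an explicit isomorphism by separating lasagna fillings into color components and applying a Gornik-type decomposition on input ball labels. By Lemma \ref{lem:onecolattime}, the summand $\skeins(W;c(L))$ is well-defined for each $c\in C(L,\defset)$, and $\skeins(W;L)=\bigoplus_c \skeins(W;c(L))$. Thus it suffices to show that for each fixed $c$ there is a natural isomorphism
\[
\skeins(W;c(L)) \;\cong\; \bigotimes_{i=1}^n \skeinitw(W, L_{c^{-1}(\lambda_i)}, \C).
\]

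To construct the forward map $\Phi$, take a tensor of fillings $F_1\otimes\cdots\otimes F_n$ with $F_i$ a $\gl_{N_i}$ filling of $(W, L_{c^{-1}(\lambda_i)})$, perturb the $F_i$ to be mutually disjoint (generic since the underlying surfaces are $2$-dimensional in a $4$-manifold), and assemble them into a single lasagna filling of $(W,L)$ whose sheets inherit the $\lambda_i$-coloring from their origin. Each label on an input ball of $F_i$, originally valued in the undeformed $\gl_{N_i}$ homology of a color-homogeneous input link, is promoted to a $\defset$-deformed label via the embedding
\[
\KhR_{N_i}^{\mathrm{tw}}(L')\hookrightarrow \KhR_\defset(L')
\]
that picks out the single summand in the Chinese Remainder decomposition of $\KhR_\defset(L')$ corresponding to the constant $\lambda_i$-coloring of $L'$. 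To construct the inverse map $\Psi$, take a lasagna filling of $(W, c(L))$, note that the surface coloring $c$ induces a coloring on every input link $L_B$ of every input ball $B$, and decompose the label $v\in \KhR_\defset(L_B)$ using Gornik's theorem; the defining skein relation (since the idempotent $e(\lambda_j)$ for $j\neq i$ acts as zero on any disk component colored $\lambda_i$) kills all summands except the one matching the induced coloring. The filling then splits by color into $n$ $\gl_{N_i}$-fillings.

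The main obstacle is checking that $\Phi$ and $\Psi$ descend to the skein quotient and are mutually inverse, which reduces to the naturality of Gornik's decomposition with respect to the maps induced by lasagna algebra operations, i.e.\ establishing the functorial upgrade \eqref{eq:ThmC}. I would prove this naturality locally via the Robert--Wagner foam evaluation \cite{1702.04140}: on a foam all of whose facets carry a single color $\lambda_i$, the $\defset$-deformed evaluation equals the undeformed $\gl_{N_i}$ evaluation up to a scalar that depends only on the Euler characteristic of each colored component and on the differences $\lambda_i-\lambda_j$ (this is exactly the rescaling factor defining $\KhR_{N_i}^{\mathrm{tw}}$). Compatibility of the idempotent projections with arbitrary cobordism maps then follows because those maps are $\KhR_\defset(\bigcirc)$-module morphisms and the idempotents $e(\lambda_i)$ are central in $\KhR_\defset(\bigcirc)$; equivalently, the fact that the deformed foam evaluation vanishes whenever a closed sub-surface is labeled by two distinct idempotents forces any mixed-color contribution to cancel.

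Once these local compatibilities are in hand, $\Phi\circ\Psi=\mathrm{id}$ is immediate from splitting a single filling by color and reassembling, while $\Psi\circ\Phi=\mathrm{id}$ follows from the tensor-product form of the Gornik decomposition on color-homogeneous links. The bigrading compatibility (by $H_2(W;L)\times \Z_t$) is automatic: the relative second homology class of the combined filling equals the sum of the classes of the $F_i$, and the $t$-grading is additive over disjoint union of fillings; the $q$-grading is forgotten on both sides by definition of $\skeinitw$ and by the loss of the $q$-grading in the deformed theory.
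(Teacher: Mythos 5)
Your high-level plan matches the paper's: decompose by coloring via Lemma~\ref{lem:onecolattime}, then identify each color-sector with a tensor product of (twisted) $\gl_{N_i}$ skein modules, using the Robert--Wagner foam evaluation to track the scaling factor that produces the twist. However, there is a genuine and central gap in your construction of the map $\Phi$.

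You claim that the fillings $F_i$ can be ``perturbed to be mutually disjoint (generic since the underlying surfaces are $2$-dimensional in a $4$-manifold).'' This is false by dimension count: two generic surfaces in a $4$-manifold intersect transversely in a \emph{finite set of points}, not in the empty set ($2+2=4$, not $<4$). Resolving these intersections is in fact the heart of the paper's Construction~\ref{constr:chromatography}: one drills out a ball around each transverse intersection of $\lambda_i$- and $\lambda_j$-colored sheets and inserts a new input ball decorated with a $\lambda_i$-$\lambda_j$-colored Hopf link, labeled with the canonical immersion-point class $\immballpn$ from Example~\ref{example:Hopfsplit}, according to the sign of the intersection. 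This step is what makes the definition nontrivial: well-definedness of the superposed filling requires checking that different transverse perturbations differ only by creation/annihilation of $\immballp$--$\immballn$ pairs (which cancel in the skein module), and the inverse $\Psi$ is obtained from a reversed lasagna filling whose $\immballnp$ labels cancel the $\immballpn$ labels of $\Phi$ in pairs; your proposal has no mechanism to handle these intersections because it assumes they do not occur. Relatedly, the claimed $t$-grading additivity ``over disjoint union'' needs to account for the $\immballpn$ insertions; the paper observes (Remark~\ref{rem:whyconv}) that the $\fr$-grading conventions are chosen precisely so that these contributions do not require additional shifts, which is a point you cannot see if the surfaces are assumed disjoint. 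The rest of your argument (the rescaling by Euler characteristic, the role of $e(\lambda_i)$, and the reduction to the naturality statement \eqref{eq:ThmC}) is on the right track and agrees with the paper's treatment.
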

\begin{remark}
The direct sum in Theorem~\ref{thm:defskein} can be restricted to the index set
of those colorings $c\in C(L,\defset)$ of link components by elements of
$\defset$, such that every sublink $L_{c^{-1}(\lambda_i)}$ is nullhomologous in
$W$. Otherwise the corresponding tensor factor $\skeinitw(W;
L_{c^{-1}(\lambda_i)},\revise{\C})$ and thus the entire direct summand is zero.
\end{remark}

\begin{remark}\label{rem:whyconv}
 The isomorphism in Theorem~\ref{thm:defskein} is our main reason to prefer the
 grading convention for $\fskein$ over those of $\skein$. The analogous
 statement for $\skein$ would require additional grading shifts on the summands
 on the right-hand side, computed from linking numbers of the color-homogeneous
 sublinks.
\end{remark}

Before we prove Theorem~\ref{thm:defskein}, we consider the special case of
$W=B^4$, which appears in \cite{MR3590355}. In this setting, the relevant
isomorphism simply relates the $\defset$-deformed link homology with
$\mathfrak{gl}_{N_i}$ link homologies of sublinks. 

\begin{theorem}\label{thm:deflinks}
  Let $L\subset S^3$ be a link and $\defset=\{\lambda_1^{N_1},\dots
  \lambda_n^{N_n}\}$ a multiset of complex deformation parameters. Then there
  exists an isomorphism of $\Z_t$-graded $\C$-vector spaces: 
  \begin{equation}
    \label{eqn:linkdecomp}
\KhR_{\defset,\fr}(L) \cong \bigoplus_{c\in C(L,\defset)} \bigotimes_{i=1}^n \KhR_{N_i,\fr}(L_{c^{-1}(\lambda_i)}, \C)   
  \end{equation}
\end{theorem}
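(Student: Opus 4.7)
The plan is to combine Lemma~\ref{lem:onecolattime} with a foam-theoretic analysis that identifies each colored summand as a tensor product of $\gl_{N_i}$ link homologies. Using the isomorphism~\eqref{eqn:B4id}, Lemma~\ref{lem:onecolattime} already produces
\[
\KhR_{\defset,\fr}(L) \cong \bigoplus_{c\in C(L,\defset)} \KhR_{\defset,\fr}(c(L)),
\]
so the task reduces to producing, for each coloring $c$, an isomorphism
\[
\KhR_{\defset,\fr}(c(L)) \cong \bigotimes_{i=1}^n \KhR_{N_i,\fr}(L_{c^{-1}(\lambda_i)}, \C).
\]
This is essentially the vector-space content of the main theorem of \cite{MR3590355}; my plan is to reprove it in the framing-adapted conventions of Remark~\ref{rem:gradconv}, in a form suitable for upgrading to Theorem~\ref{thm:defskein}.

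I would work at the chain level using the Robert--Wagner foam evaluation. The Chinese Remainder isomorphism $\KhR_{\defset}(\bigcirc) \cong \prod_i \C[X]/(X-\lambda_i)^{N_i}$ provides orthogonal idempotents $e(\lambda_i)$ that decorate web facets in the cube of resolutions. The core observation is twofold: (a) a $\defset$-foam whose facets are all decorated by $e(\lambda_i)$ evaluates, after the change of variable $X\mapsto X+\lambda_i$, to the $\gl_{N_i}$ evaluation of the same underlying foam, up to a scalar depending only on $\defset$, on $c$, and on the Euler characteristics of the facets; and (b) any foam with a seam bordering facets of different colors $\lambda_i \neq \lambda_j$ evaluates to zero, since at such a seam the Frobenius multiplication picks up $e(\lambda_i)\,e(\lambda_j)=0$. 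Consequence (b) forces mixed-color crossing complexes to reduce to parallel-strand identities in the framing-adapted convention, so the chain complex for the coloring $c$ splits as a tensor product of color-homogeneous subcomplexes. Consequence (a) then identifies each such factor with the $\gl_{N_i}$ chain complex of the corresponding color-homogeneous sublink, up to an overall rescaling (the twist $\mathrm{tw}$ of Theorem C) that is a monomial in Euler characteristics.

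The main obstacle is the careful bookkeeping in (a) and (b): one must verify that the accumulated rescaling from $X\mapsto X+\lambda_i$ is globally consistent across closed foam evaluations and that, in the framing-adapted grading, mixed-color crossings truly contribute no net grading shift once the color-homogeneous factors are separated. This is exactly where the preference for the convention of Remark~\ref{rem:gradconv} pays off: it absorbs precisely the scalars arising from the projection onto the $\lambda_i$-summand. Since everything is local to foams, once these computations are settled they propagate through the cube-of-resolutions construction; passing to homology then delivers the decomposition~\eqref{eqn:linkdecomp}.
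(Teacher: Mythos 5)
Your plan is a more self-contained route than the one taken in the paper: the paper proves Theorem~\ref{thm:deflinks} by simply citing the main theorem of \cite{MR3590355} (specialized to vector coloring, translated to Robert--Wagner foams, and regraded as in Remark~\ref{rem:gradconv}); the genuine foam-level analysis you are sketching is reserved in the paper for the naturality upgrade needed in Theorem~\ref{thm:defskein}/Theorem~C, where it appears via Example~\ref{example:Hopfsplit} and the chromatography construction on lasagna fillings rather than as a chain-level tensor factorization. So you are essentially proposing to re-derive \cite{MR3590355}; the paper instead cites it and separately supplies the naturality.

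There is a genuine error in your claim~(b). A $\glN$-seam is a trivalent locus where two $1$-labeled facets meet a $2$-labeled facet; it is \emph{not} a pair-of-pants multiplication in the circle algebra $\C[X]/\prod_i (X-\lambda_i)^{N_i}$, so no factor $e(\lambda_i)e(\lambda_j)$ is picked up there, and the Robert--Wagner evaluation of a closed foam with a seam bordering $1$-facets decorated by distinct idempotents is \emph{not} automatically zero: the adjacent $2$-labeled facet just carries the $\{\lambda_i,\lambda_j\}$-type coloring and the sum over admissible facet colorings proceeds. What is actually true, and what the paper uses in Example~\ref{example:Hopfsplit}, is a statement about \emph{webs}: in the singular ($2$-labeled) resolutions of a mixed-color crossing, the $1$-labeled edges form circles that would need to carry both $e(\lambda_i)$ and $e(\lambda_j)$ simultaneously, so those webs are isomorphic to zero in the deformed foam category, which is what makes the mixed-color crossing complex homotopy equivalent to (a shift of) the identity. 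Without this replacement, your step ``mixed-color crossing complexes reduce to parallel-strand identities'' is unsupported. Moreover, even after the crossing simplification, the chain complex for $c(L)$ does not literally split as a tensor product of color-homogeneous subcomplexes: one must first use the homotopy equivalences from the mixed-crossing reduction to isotope the color-homogeneous sublinks apart in $\R^3$, and then appeal to monoidality of the theory under disjoint union (K\"unneth over $\C$) to obtain the tensor decomposition on homology. Both of these points need to be filled in for the argument to stand.
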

\begin{proof}
This is a direct consequence of the version of the main result (Theorem 1.1) of
\cite{MR3590355}, specialized to links colored with the vector representation
(instead of more general fundamental representations), in slightly different
grading conventions (invisible in the isomorphism), and using the construction
via Robert--Wagner foams instead of foams determined via categorified quantum
groups, see also \cite[Section 4.1]{1702.04140}.
\end{proof}

The analogous result for skein modules, Theorem~\ref{thm:defskein}, would follow
from the local result in $B^4$, provided the isomorphisms in
\eqref{eqn:linkdecomp} assemble into a natural isomorphism of link homology
theories, i.e. are if they are natural with respect to link cobordism. It turns
out that this is actually not the case, although for a boring reason: one can
already observe in small examples that cobordism maps have to be rescaled by a
nonzero factor depending on the Euler characteristic of the cobordism and
$\defset$. For example, (dotted) $e(\lambda_i)$-colored $2$-spheres evaluate to
scalars depending on $\defset$ instead of just $0$ or $1$. 

Along the way towards proving \eqref{eqn:chromatographyskein}, we will establish
a natural isomorphism of link homologies, in which the tensor factors on the
right-hand side in \eqref{eqn:linkdecomp} are replaced by versions of the
$\gl_{N_i}$ link homology, twisted by an invertible scalar depending on
$\defset$. The resulting decomposition result is strictly stronger\footnote{In
fact, it was this theorem that first lead to a proof of the functoriality of
$\glN$ homology in \cite{MR3877770} and only now that this has been established,
it makes sense to ask the naturality question.} than Theorem~\ref{thm:deflinks}
from \cite{MR3590355} 
for \eqref{eqn:linkdecomp}. Rather than refining the proof from
\cite{MR3590355} we will give an entirely new argument, making use of the
functoriality result from \cite{MR3877770}. Since the latter builds on
\cite{MR3590355}, our approach here is not independent from that line of work.

\begin{example}[Immersion point input balls]\label{example:Hopfsplit} Consider a positive Hopf link $H$ and a
coloring $c$ of one component by $\lambda_i$ (green, lighter) and the other by
$\lambda_j$ (red, darker) for $j\neq i$. 
\revise{We compute the summand of $\KhR_{\defset,\fr}\revise{(H)}$
corresponding to $c$, also denoted $\skeins(B^4;c(H))$, using the
following complex of (idempotent-colored) webs and foams}
\begin{equation}
  \label{eqn:deformedHopf}  
\xy
(-25,0)*{\begin{tikzpicture} [xscale=-.4,yscale=.4]
 \draw[dred, very thick, directed=.6] (1.7,1.3) to [out=315,in=180] (2,1) to [out=0,in=180] (3,2) to [out=0,in=270] (3.5,2.5) to [out=90,in=0] (3,3) to (1,3) to [out=180,in=90] (0.5,2.5) to [out=270,in=180] (1,2) to [out=0,in=135] (1.3,1.7);
 \draw[green, very thick,directed=.4] (2.7,1.3) to [out=315,in=180] (3,1) to [out=0,in=90] (3.5,0.5) to [out=270,in=0] (3,0) -- (1,0) to [out=180,in=270] (.5,0.5) to [out=90,in=180] (1,1) to [out=0,in=180] (2,2) to [out=0,in=135] (2.3,1.7);
\end{tikzpicture}};
(80,0)*{
\begin{tikzpicture} [scale=.4]
 \draw[very thick, dred] (3.5,2.5) to [out=90,in=0] (3,3) to (1,3) to [out=180,in=90] (0.5,2.5);
 \draw[very thick, green] (3.5,0.5) to [out=270,in=0] (3,0) -- (1,0) to [out=180,in=270] (.5,0.5);
 % Interesting part
  \draw[very thick, dred] (0.5,2.5) to [out=270,in=180] (1,2) (3,2) to [out=0,in=270] (3.5,2.5) (1,1) to (3,1);
 \draw[very thick,green]  (.5,0.5) to [out=90,in=180] (1,1) (3,1) to [out=0,in=90] (3.5,0.5) (1,2) to (3,2);
\end{tikzpicture}
};
(88,0)*{
\begin{tikzpicture} [scale=.4]
 \draw[opacity=0] (0,0) to (0,-5);
\end{tikzpicture}
};
(20,5.5)*{\begin{tikzpicture} [scale=.5]
 \draw[->] (0,0) -- (1.5,0.5);
  \end{tikzpicture}};
 (20,-5.5)*{\begin{tikzpicture} [scale=.5]
 \draw[->] (0,0) -- (1.5,-0.5);
  \end{tikzpicture}};
 (60,5.5)*{\begin{tikzpicture} [scale=.5]
 \draw[->] (0,0) -- (1.5,-0.5);
  \end{tikzpicture}};
 (60,-5.5)*{\begin{tikzpicture} [scale=.5]
 \draw[->] (0,0) -- (1.5,0.5);
 \end{tikzpicture}};
(40,10)*{
\begin{tikzpicture} [scale=.4]
  \draw[very thick, dred] (3,2) \ru (3.5,2.5) to [out=90,in=0] (3,3) to (1,3) to [out=180,in=90] (0.5,2.5) (1.5,1.5)\dr (2,1) to (3,1);
  \draw[very thick, green] (3,1) \rd (3.5,0.5) to [out=270,in=0] (3,0) -- (1,0) to [out=180,in=270] (.5,0.5)  (1.5,1.5)\ur (2,2) to (3,2);
 \draw[double] (1,1.5) to (1.5,1.5);
 \draw[very thick,dred] (.5,2.5) to [out=270,in=90] (1,1.5);
 \draw[very thick,green] (1,1.5)  to  [out=270,in=90]  (.5,.5);
\end{tikzpicture}
};
(40,-10)*{
\begin{tikzpicture} [scale=.4]
  \draw[very thick, dred] (3.5,2.5) to [out=90,in=0] (3,3) to (1,3) to [out=180,in=90] (0.5,2.5) \dr (1,2) (2.5,1.5) \dl (2,1) to (1,1);
  \draw[very thick, green] (3.5,0.5) to [out=270,in=0] (3,0) -- (1,0) to [out=180,in=270] (.5,0.5) \ur (1,1) (2.5,1.5) \ul (2,2) to (1,2);
\draw[double] (2.5,1.5) to (3,1.5);
\draw[very thick, dred] (3.5,2.5) to [out=270,in=90] (3,1.5);
\draw[very thick,green] (3,1.5)  to  [out=270,in=90] (3.5,.5);
\end{tikzpicture}
};
(0,0)*{
\begin{tikzpicture} [scale=.4]
  \draw[very thick, dred] (3.5,2.5) to [out=90,in=0] (3,3) to (1,3) to [out=180,in=90] (0.5,2.5);
  \draw[very thick, green] (3.5,0.5) to [out=270,in=0] (3,0) -- (1,0) to [out=180,in=270] (.5,0.5);
 \draw[very thick, dred] (3.5,2.5) to [out=270,in=90] (3,1.5) (.5,2.5) to [out=270,in=90] (1,1.5);
 \draw[very thick,green] (1,1.5)  to  [out=270,in=90]  (.5,.5) (3,1.5)  to  [out=270,in=90] (3.5,.5);
  \draw[very thick,dred] (1.5,1.5) \dr (2,1) \ru (2.5,1.5);
  \draw[very thick,green] (1.5,1.5) \ur (2,2) \rd (2.5,1.5);
  \draw[double] (1,1.5) to (1.5,1.5);
  \draw[double] (2.5,1.5) to (3,1.5);
\end{tikzpicture}
};
(32,10)*{q^{-1}};
(32,-10)*{q^{-1}};
(72,0)*{q^{-2}};
(40,0)*{\bigoplus};
\endxy 
\end{equation}
where the leftmost term is in $t$-degree zero. The other terms are incompatibly
colored and thus are isomorphic to zero in the $\defset$-deformed foam category.
We now fix as basis element for the $1$-dimensional $\C$-vector space
$\skeins(B^4;c(H))$ the $t$-degree zero class $\immballp$ represented by the
idempotent labelled foam pictured in the following figure. 
\[\immballp:=
\raisebox{-0.5\height}{\includegraphics{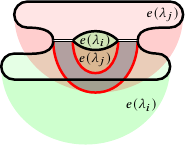}}
  \quad, \quad
  \raisebox{-0.5\height}{\includegraphics{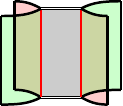}}
  % \begin{tikzpicture} [anchorbase, xscale=-.45, yscale=.25,fill opacity=0.2]
  %   %shading	
  %   \path[fill=red] (2,7) to [out=180,in=45] (1,6.5) to (1,.5) to [out=45,in=180] (2,1);
  %   \path[fill=green] (2.5,6) to [out=180,in=315] (1,6.5) to (1,.5) to [out=315,in=180] (2.5,0);
  %   \path[fill=black] (1,6.5) to (-.5,6.5) to (-.5,.5) to (1,.5);
  %   \path[fill=red] (-1.5,0) to [out=0,in=225] (-.5,.5) to (-.5,6.5) to [out=225,in=0] (-1.5,6);
  %   \path[fill=green] (-2,1) to [out=0,in=135] (-.5,.5) to (-.5,6.5) to [out=135,in=0] (-2,7);
  %   %bottom web
  %   \draw[very thick] (2,1) to [out=180,in=45] (1,.5);
  %   \draw[very thick] (2.5,0) to [out=180,in=315] (1,.5);
  %   \draw[double] (1,.5) to (-.5,.5);
  %   \draw[very thick] (-.5,.5) to [out=225,in=0] (-1.5,0);
  %   \draw[very thick] (-.5,.5) to [out=135,in=0] (-2,1);
  %   %seams
  %   \draw[very thick, red] (1,6.5) to (1,.5);
  %   \draw[very thick, red] (-.5,.5) to (-.5,6.5);
  %   %vertical edges
  %   \draw[very thick] (2,1) to (2,7);
  %   \draw[very thick] (2.5,0) to (2.5,6);
  %   \draw[very thick] (-1.5,0) to (-1.5,6);
  %   \draw[very thick] (-2,1) to (-2,7);	
  %   %top web
  %   \draw[very thick] (2,7) to [out=180,in=45] (1,6.5);
  %   \draw[very thick] (2.5,6) to [out=180,in=315] (1,6.5);
  %   \draw[double] (1,6.5) to (-.5,6.5);
  %   \draw[very thick] (-.5,6.5) to [out=225,in=0] (-1.5,6);
  %   \draw[very thick] (-.5,6.5) to [out=135,in=0] (-2,7);
  % \end{tikzpicture}
\] After bending, this foam (in its form shown on the right above) can also be
interpreted as the only non-zero component in the following isomorphism between
the positive and the negative idempotent-colored crossing complexes:

\[
  \CC{\begin{tikzpicture}[anchorbase,scale=.2]
    \draw [very thick, green, ->] (1,-2)to(1,-1.7) to [out=90,in=270] (-1,1.7) to (-1,2);
    \draw [white, line width=.15cm] (-1,-2) to (-1,-1.7) to [out=90,in=270] (1,1.7) to (1,2);
    \draw [very thick,dred, ->] (-1,-2) to (-1,-1.7) to [out=90,in=270] (1,1.7) to (1,2);
  \end{tikzpicture}
  }_\fr
   \quad = \quad 
   \uwave{
  \begin{tikzpicture}[anchorbase,scale=.2]
    \draw [very thick,green, ->] (1,-2)to(1,-1.7) to [out=90,in=315] (0,-.5)(0,.5) to [out=135,in=270] (-1,1.7) to (-1,2);
    \draw[double] (0,.5) to (0,-.5);
    \draw [very thick,dred, ->] (-1,-2) to(-1,-1.7) to [out=90,in=225] (0,-.5)  (0,.5) to [out=45,in=270] (1,1.7) to (1,2);
  \end{tikzpicture}
  }
  \to
    q^{-1}\;
  \begin{tikzpicture}[anchorbase,scale=.2]
    \draw [very thick, green, ->] (1,-2) to (1,0) (-1,0) to (-1,2);
    \draw [very thick,dred, ->] (-1,-2) to (-1,0) (1,0) to (1,2);
  \end{tikzpicture}
  \quad \cong \quad
  \begin{tikzpicture}[anchorbase,scale=.2]
    \draw [very thick,green, ->] (1,-2)to(1,-1.7) to [out=90,in=315] (0,-.5)(0,.5) to [out=135,in=270] (-1,1.7) to (-1,2);
    \draw[double] (0,.5) to (0,-.5);
    \draw [very thick,dred, ->] (-1,-2) to(-1,-1.7) to [out=90,in=225] (0,-.5)  (0,.5) to [out=45,in=270] (1,1.7) to (1,2);
  \end{tikzpicture}
  \quad \cong \quad 
  q\;
  \begin{tikzpicture}[anchorbase,scale=.2]
    \draw [very thick, green, ->] (1,-2) to (1,0) (-1,0) to (-1,2);
    \draw [very thick,dred, ->] (-1,-2) to (-1,0) (1,0) to (1,2);
  \end{tikzpicture}
  \to
  \uwave{
  \begin{tikzpicture}[anchorbase,scale=.2]
    \draw [very thick,green, ->] (1,-2)to(1,-1.7) to [out=90,in=315] (0,-.5)(0,.5) to [out=135,in=270] (-1,1.7) to (-1,2);
    \draw[double] (0,.5) to (0,-.5);
    \draw [very thick,dred, ->] (-1,-2) to(-1,-1.7) to [out=90,in=225] (0,-.5)  (0,.5) to [out=45,in=270] (1,1.7) to (1,2);
  \end{tikzpicture}
  }
  \quad = \quad
  \CC{\begin{tikzpicture}[anchorbase,scale=.2]
    \draw [very thick,dred, ->] (-1,-2) to (-1,-1.7) to [out=90,in=270] (1,1.7) to (1,2);
    \draw [white, line width=.15cm] (1,-2)to(1,-1.7) to [out=90,in=270] (-1,1.7) to (-1,2);
    \draw [very thick,green, ->] (1,-2)to(1,-1.7) to [out=90,in=270] (-1,1.7) to (-1,2);
  \end{tikzpicture}
  }_\fr
\]
Here the two isomorphisms are simply the projections/inclusions of the term in
homological degree zero, since the other webs are isomorphic to zero due to the
incompatible idempotent coloring.

\revise{Analogously, we define a canonical class $\immballn$ in the colored deformed
homology of the negative Hopf using the link diagram obtained by reversing signs of crossings in the above diagram.
After bending, this corresponds to the inverse of the chain map of the complexes corresponding to $\immballp$.
This implies that $\immballp$ and $\immballn$ cancel in situations we will describe below.}

We will interpret the classes $\immballpn$
as corresponding to a pair of idempotent-colored oriented disks that
transversely intersect in a single point with sign $\pm$. 

\end{example}

\begin{remark} We invite the reader to compare the computation in
\eqref{eqn:deformedHopf} with its analog for Khovanov homology, computed using
the Karoubi-envelope technology from \cite{MR2253455}. In the $\sll_2$-case of
Khovanov homology, the summands of Lee homology can be naturally parametrized by
orientations of the link. In deformations of $\gl_2$ homology, the summands are
naturally parametrized by labellings of components by roots $\lambda_i$ of the
deformation polynomial. The parametrization for the $\gl_2$ case generalizes to $\glN$
and non-simple deformations.
\end{remark}

\begin{construction}[Chromatography]\label{constr:chromatography} Let $L \subset S^3$ be a link and $c\in
C(L,\defset)$ a coloring of its components. 

We aim to construct an isomorphism
\begin{equation}\label{eqn:chromatography}
\bigotimes_{i=1}^n \skeins(B^4; c(L_{c^{-1}(\lambda_i)}))   
\cong \skeins(B^4;c(L))
\end{equation}  
and throughout the construction we will make liberal use of the isomorphisms from \eqref{eqn:B4id} and
 \eqref{eqn:B4idc}. In particular, we see that $\revise{\skeins(B^4;
 c(L_{c^{-1}(\lambda_i)}))}$ is spanned by \emph{tautological lasagna fillings}
 consisting of a single concentric input ball, decorated by the link
 $L_{c^{-1}(\lambda_i)}$ and labelled by a class $v_i\in
 \KhR_\defset(c(L_{c^{-1}(\lambda_i)}))$. Here we have written
 $c(L_{c^{-1}(\lambda_i)})$ for the sublink $L_{c^{-1}(\lambda_i)}$ with the
 coloring inherited from $c$, namely the one homogeneous of color $\lambda_i$.
 We denote the tautological lasagna filling corresponding to $v_i$ by
 $F_i(v_i)$.

Let $F_1(v_1)\otimes \cdots \otimes F_n(v_n)$ be an elementary tensor of
tautological lasagna fillings \revise{on the left-hand side of
\eqref{eqn:chromatography}. Of course, such elementary tensors span the left-hand side.} Consider the (topological) union $\cup_{i} F_i(v_i)$
of the lasagna fillings $F_i(v_i)$. This is, in general, not a lasagna filling
on its own, as the input balls $B_i$ and surfaces $\surf_i$ of different
$F_i(v_i)$ may intersect in the union. Note, however, that $\cup_{i} F_i(v_i)$ has
boundary $c(L)$ and the surface is radial near the boundary. To turn $\cup_{i}
F_i(v_i)$ into a lasagna filling, we change each $F_i(v_i)$ by an isotopy, so that
the following conditions are satisfied for all $i\neq j$:
\begin{itemize}
\item the input ball $B_i$ of $F_i(v_i)$ is disjoint from the input ball $B_j$
and surface $\surf_j$ of $F_j(v_j)$,
\item the surfaces $\surf_i$ of $F_i(v_i)$ and $\surf_j$ of $F_j(v_j)$
intersect transversely in a finite number of points. 
\end{itemize} 
Once these conditions are satisfied, we call the collection $\{F_1(v_1),\dots,
F_n(v_n)\}$ \emph{transverse}. In such a collection of fillings we have a finite
number of intersection points of surfaces, at each of which the two surfaces are
colored by different idempotents, say $e(\lambda_i)$ and $e(\lambda_j)$. We
resolve such intersections by drilling out a neighborhood of the intersection
point, which creates a new input ball decorated with an
$\lambda_i$-$\lambda_j$-colored positive or negative Hopf link, which we label
by the canonical class $\immballp$ or $\immballn$ from Example~\ref{example:Hopfsplit},
depending on the sign of the intersection. We denote the resulting lasagna
filling by $F_{\cup i}(v_1 \otimes \cdots \otimes v_n)$; it represents an
element of $\skeins(B^4;c(L))$. The assignment
\[
  F_1(v_1)\otimes \cdots \otimes F_n(v_n) \mapsto F_{\cup i}(v_1 \otimes \cdots \otimes v_n)
\] 
is our candidate for the desired isomorphism \eqref{eqn:chromatography}.
\end{construction}

\begin{proposition}\label{prop:chromatography} The \emph{chromatography} map from
\eqref{eqn:chromatography} is well-defined, an isomorphism, and natural with
respect to link cobordism in the following sense. Let $S\subset S^3 \times I$ be
a link cobordism $L \to L'$, $c\colon \pi_0(S)\to \defset$ a coloring of the
components of $S$, and also denote by $c\in C(L,\defset)$, $c\in C(L',\defset)$ the induced
colorings of $L$ and $L'$ respectively. Write 
\begin{align*}
  \skeins(c(S))\colon \skeins(B^4;c(L)) &\to \skeins(B^4;c(L'))\\
  \skeins(c(S_{c^{-1}(\lambda_i)}))\colon \skeins(B^4;c(L_{c^{-1}(\lambda_i)})) &\to \skeins(B^4;c(L'_{{c}^{-1}(\lambda_i)})) 
\end{align*}
for the linear maps induced by interpreting the idempotent colored cobordism $S$
(or one of its color-homogeneous components, in the second line) as a lasagna
filling of $S^3\times I$ and gluing it onto lasagna fillings of $4$-balls. Then
we have the following commutative naturality square for the chromatography maps:
\begin{equation}
  \label{eqn:chromatographynaturality}
			\begin{tikzcd}
				\bigotimes_{i=1}^n \skeins(B^4; c(L_{c^{-1}(\lambda_i)})) 
        \arrow[d,"\bigotimes_{i=1}^n \skeins(c(S_{c^{-1}(\lambda_i)}))"]  
        \arrow[r,"\cong"] & \skeins(B^4;c(L))
        \arrow[d,"\skeins(c(S))"]
         \\
        \bigotimes_{i=1}^n  \skeins(B^4;c(L'_{{c}^{-1}(\lambda_i)}))    
        \arrow[r,"\cong"] & \skeins(B^4;c(L'))\\
      \end{tikzcd}
    \end{equation}

\end{proposition}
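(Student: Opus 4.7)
The plan is to verify in turn that the chromatography map of Construction~\ref{constr:chromatography} is (i) well-defined, i.e.\ independent of the isotopies chosen to make the collection of tautological fillings transverse; (ii) an isomorphism of $\C$-vector spaces; and (iii) natural under colored link cobordism in the sense of \eqref{eqn:chromatographynaturality}. The heart of the argument sits in (i): a local pair-cancellation identity in the deformed foam category forces isotopy invariance. Once (i) is available, (ii) reduces to a dimension count against Theorem~\ref{thm:deflinks} plus a surjectivity argument, and (iii) becomes a transversality argument for the cobordism against the filling.

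\textbf{Well-definedness.} Any two transverse representatives of the same tautological tensor $F_1(v_1)\otimes\cdots\otimes F_n(v_n)$ can be joined by a generic ambient isotopy of the surfaces $\surf_i$ in $B^4\setminus \bigsqcup B_i$. Along a generic such path, the only codimension-one catastrophe that can change the combinatorial intersection pattern between $\surf_i$ and $\surf_j$ (with $i\neq j$) is a tangency, which creates or cancels an adjacent pair of transverse double points of opposite sign. The claim is then that replacing a newly created such pair by an $\immballp$- and an $\immballn$-labeled Hopf input ball produces a lasagna filling equivalent to the one just before the tangency. Locally, this amounts to a pair-cancellation relation between a pair of disjoint, differently idempotent-colored disks and the same pair of disks joined by an $\immballp$--$\immballn$ Hopf-tube pair. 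Such a relation in turn follows from Example~\ref{example:Hopfsplit}: under the isomorphism of idempotent-colored positive and negative crossing complexes, $\immballpn$ are the unique nonzero components, so vertically stacking them along the two canceling crossings realizes the identity Reidemeister~II move. Multilinearity in the labels $v_i$ is immediate from the construction.

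\textbf{Isomorphism.} Using \eqref{eqn:B4id} and \eqref{eqn:B4idc}, chromatography becomes a $\C$-linear map
\[
\bigotimes_{i=1}^n \KhR_\defset\bigl(c(L_{c^{-1}(\lambda_i)})\bigr) \longrightarrow \KhR_\defset(c(L)).
\]
Both sides are finite dimensional, and Theorem~\ref{thm:deflinks} shows they have equal dimension, so it suffices to prove surjectivity. Any class on the right is represented by a foam from the empty link to $c(L)$; applying the well-definedness step to move its color-homogeneous subsurfaces into transverse position and absorbing the resulting intersections by $\immballp$- or $\immballn$-labeled Hopf input balls exhibits the class as a chromatography image, which gives surjectivity.

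\textbf{Naturality.} For a color-respecting cobordism $c(S)\colon c(L)\to c(L')$, the operadic action $\skeins(c(S))$ on a chromatographed filling is computed by stacking. After a further ambient isotopy of $c(S)$ we may assume its color-homogeneous pieces $c(S_{c^{-1}(\lambda_i)})$ are mutually transverse and that each stacks onto the $\lambda_i$-colored lasagna piece so as to represent $\skeins(c(S_{c^{-1}(\lambda_i)}))(v_i)$, with any inter-color intersections created by stacking resolved by additional $\immballpn$-labeled Hopf input balls. The well-definedness established above identifies the outcome with the chromatography image of $\bigotimes_i \skeins(c(S_{c^{-1}(\lambda_i)}))(v_i)$, establishing commutativity of \eqref{eqn:chromatographynaturality}.
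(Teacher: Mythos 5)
Your proposals for well-definedness and naturality track the paper's reasoning (the $\immballpn$ pair-cancellation is the key point in both), and it is a fair elaboration to attribute the cancellation to the fact that $\immballp$ and $\immballn$ are inverse components of the Example~\ref{example:Hopfsplit} crossing isomorphisms. Where you diverge substantially is the isomorphism step, and there is a real gap there. The paper does not do a dimension count; it upgrades the chromatography map to a skein-theoretic form by stripping the input labels, tubing the $n$ input balls together as in the proof of \cite[Theorem 5.2]{MR4562565}, and recognizing the map as gluing a fixed lasagna filling $F_{\cup i}$ of $S^3\times I$ from $\bigsqcup_i c(L_{c^{-1}(\lambda_i)})$ to $c(L)$. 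The explicit two-sided inverse is then gluing the reversed filling $\overline{F_{\cup i}}$, and both composites reduce to identities by cancelling $\immballp$/$\immballn$ pairs. This is the device that makes isomorphism and naturality fall out uniformly, and you do not reconstruct it.

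Your surjectivity argument has a concrete problem. You claim that any class in $\skeins(B^4;c(L))$ is represented by ``a foam from the empty link to $c(L)$'' whose ``color-homogeneous subsurfaces'' can be isotoped to transverse position. But in a lasagna filling the ambient $2$-complex is an embedded surface, and the combinatorially rich data (foams, with seams that can join differently colored sheets) lives only in the input-ball labels; ``color-homogeneous subsurface'' of a general representative is not well-defined in the way your argument needs. More seriously, a generic filling of $\skeins(B^4;c(L))$ can have a single input ball whose boundary link is multi-colored; to split its label into a tensor of monochromatic labels is exactly the chromatography isomorphism for that input ball's link, so the argument as written is circular. The paper's route via the reversed filling $\overline{F_{\cup i}}$ avoids this because the inverse is a concrete cobordism, not a reassembly of a representative. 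Separately, even granting the dimension count, invoking Theorem~\ref{thm:deflinks} runs somewhat against the paper's stated goal of giving ``an entirely new argument'' rather than refining \cite{MR3590355}, though that is a matter of taste rather than correctness.
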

Before giving the proof, we sketch an example of $F_{\cup i}(v_1 \otimes \cdots
\otimes v_n)$ and another lasagna filling $F_{\cup i}$ that will play an
auxiliary role in the proof.

\[
  F_{\cup i}(v_1 \otimes v_2 \otimes v_3) \;=\;
  \begin{tikzpicture}[anchorbase,scale=.2]
    \bsphere{0}{2}{14}
    \sphere{-4.3}{7.7}{1}
    \node at (-3.7,6) {\tiny$\immballp$};
    \sphere{0}{0}{3}
    \sphere{7}{0}{3}
    \sphere{-7}{0}{3}
    \trefoil{.5}{1}{1}{green,very thick}
    \node at (0.4,-.7) {\tiny$v_2$};
    \unknot{-7}{1}{.75}{red,very thick}
    \node at (-7.5,-.7) {\tiny$v_1$};
    \unknot{6.75}{1}{.75}{blue,very thick}
    \unknot{8.75}{1}{.75}{blue,very thick}
    \node at (7.25,-.7) {\tiny$v_3$};
    \draw[red, thick] (-8.3,.85) to [out=80,in=290] (-7.3,11)
    (-7,.85) to [out=80,in=290] (-4.4,11);
    \draw[blue, thick]
    (5.4,.85) to [out=100,in=270] (-5.7,11)
    (6.9,.85) to [out=100,in=270] (-2.7,11)
    (7.4,.85) to [out=90,in=270] (7.4,9)
    (8.8,.85) to [out=90,in=270] (10.3,9);
    \draw[green, thick]
    (0,7.4) to [out=80,in=260] (.7,11)
    (2.5,6.1) to [out=80, in=260] (5.2,10.7)
    (-1,0.2) to [out=80,in=260] (-.3,5.3)
    (2,0.2) to [out=80, in=260] (2,4.2)
    ;
    \trefoil{3}{9}{1.5}{green,very thick}
    \hopflinkcol{-5}{11}{1.5}{blue,very thick}{red,very thick}
    \unknot{9}{9}{1.5}{blue,very thick}
    \fsphere{0}{2}{14}
    \end{tikzpicture}\quad , \quad F_{\cup i} \;=\;
    \begin{tikzpicture}[anchorbase,scale=.2]
      \bsphere{0}{2}{14}
      \sphere{-4.3}{7.7}{1}
      \node at (-3.7,6) {\tiny$\immballp$};
\draw[white, line width=1mm] (-10,0) \dr (-7,-2) to (7,-2) \ru (10,0);
\draw  (-10,0) \dr (-7,-2) to (7,-2) \ru (10,0);
\draw[thick] 
(-10,0) \dr (-7,-3) to (7,-3) \ru (10,0) 
(-10,0) \ur (-7,3)to (7,3) \rd (10,0);
\draw[dashed] (-10,0) \ur (-7,2)to (7,2) \rd (10,0);
      \trefoil{.5}{1}{1}{green,very thick}
      \unknot{-7}{1}{.75}{red,very thick}
      \unknot{6.75}{1}{.75}{blue,very thick}
      \unknot{8.75}{1}{.75}{blue,very thick}
      \draw[red, thick] (-8.3,.85) to [out=80,in=290] (-7.3,11)
      (-7,.85) to [out=80,in=290] (-4.4,11);
      \draw[blue, thick]
      (5.4,.85) to [out=100,in=270] (-5.7,11)
      (6.9,.85) to [out=100,in=270] (-2.7,11)
      (7.4,.85) to [out=90,in=270] (7.4,9)
      (8.8,.85) to [out=90,in=270] (10.3,9);
      \draw[green, thick]
      (0,7.4) to [out=80,in=260] (.7,11)
      (2.5,6.1) to [out=80, in=260] (5.2,10.7)
      (-1,0.2) to [out=80,in=260] (-.3,5.3)
      (2,0.2) to [out=80, in=260] (2,4.2)
      ;
      \trefoil{3}{9}{1.5}{green,very thick}
      \hopflinkcol{-5}{11}{1.5}{blue,very thick}{red,very thick}
      \unknot{9}{9}{1.5}{blue,very thick}
      \fsphere{0}{2}{14}
      \end{tikzpicture}  
\]

\begin{proof}
  First we check that the chromatography map is well-defined. For given lasagna
  fillings $F_1(v_1)\otimes \cdots \otimes F_n(v_n)$, the filling $F_{\cup
  i}(v_1 \otimes \cdots \otimes v_n)$ depends on the choice of a small isotopy
  to make the original fillings transverse. Different isotopies result in unions
  $F_{\cup i}(v_1 \otimes \cdots \otimes v_n)$ which differ by isotopy and the
  addition or removal of pairs of $\immballp$- and $\immballn$-labeled intersections \revise{which are connected by a Whitney disk. These}
  cancel in $\skeins(B^4;c(L))$, see Example~\ref{example:Hopfsplit}. The union is thus well-defined. 

  To show that the chromatography map is an isomorphism and natural, it \revise{is} helpful to give a
  skein-theoretic description. To this end, we remove the labels
  $v_1,\cdots,v_n$ from the input balls in $F_{\cup i}(v_1 \otimes \cdots
  \otimes v_n)$ and view what remains as a lasagna diagram (see \cite[Definition
  5.1]{MR4562565}) with $\immballpn$ insertions and $n$ input balls
  decorated with the $c(L_{c^{-1}(\lambda_i)})$ for $1\leq i \leq n$. As in the
  proof of \cite[Theorem 5.2]{MR4562565}, we can tube the input balls
  together to obtain a lasagna filling $F_{\cup i}$ of $S^3\times I$ with
  colored link $c(L)\subset S^3 \times \{1\}$ on one side and the
  color-separated link $\bigsqcup_{i=1}^n c(L_{c^{-1}(\lambda_i)})\subset S^3
  \times \{0\}$ on the other side. 
  
  Note that the chromatography map \eqref{eqn:chromatography} can be recovered
  from $F_{\cup i}$, by considering the linear map \[\skeins\left(B^4;\bigsqcup
  _{i=1}^n c(L_{c^{-1}(\lambda_i)})\right)  \to \skeins(B^4; c(L))\] induced by
  gluing on $F_{\cup i}$ and using the monoidality of the $4$-ball skeins with
  respect to boundary connect sum, which is strict (not lax) since we work
  over $\C$.

 Analogously, we can construct another lasagna filling $\overline{F_{\cup i}}$
  of $S^3\times I$ with the color separated link on the $S^3 \times \{0\}$-side
  and $c(L)$ on the $S^3 \times \{1\}$-side, simply by reversing $F_{\cup i}$
  and adjusting orientations. The invertibility of the chromatography map
  \eqref{eqn:chromatography} then follows by showing that the composed lasagna
  fillings $F_{\cup i}\circ \overline{F_{\cup i}}$ and $\overline{F_{\cup i}}
  \circ F_{\cup i}$ are equivalent to identity link cobordisms in the skein of
  $S^3\times I$, since the $\immballpn$ insertions in $F_{\cup i}$ cancel with
  $\immballnp$ insertions in $\overline{F_{\cup i}}$ in pairs. 
  
The naturality squares \eqref{eqn:chromatographynaturality} also follow from the
skein-theoretic description. By the same arguments as used for the
well-definedness above, the result of gluing the lasagna filling corresponding
to the idempotent-colored cobordism $c(S)$ to the output side \revise{$S^3\times \{1\}$} of $F_{\cup i}$ \revise{in $S^3\times I$}
is a lasagna filling that is equivalent to the one obtained by gluing the
filling $\bigsqcup_{i=1}^n c(S_{c^{-1}(\lambda_i)})$, corresponding to the split
disjoint union of the colored components of $S$, to the input side \revise{$S^3\times \{0\}$ of $F_{\cup i}$}. 
\end{proof}

\begin{remark}
Proposition~\ref{prop:chromatography} can be interpreted as saying that the
disk-like $4$-category controlling $\skeins$ is the tensor product of its
monochromatic sub-$4$-categories. The generating $2$-morphism in $\skeins$,
which is the default coloring of all links and link cobordisms, corresponds to
the direct sum of the generating $2$-morphisms in the monochromatic versions.
\end{remark}

\begin{proof}[Proof of Theorem~\ref{thm:defskein}] We will use the direct sum
decomposition from Lemma~\ref{lem:onecolattime} and hence focus on a single
coloring $c\in C(L,\defset)$. We first prove a variation of the desired
isomorphism \eqref{eqn:chromatographyskein}, namely:
\begin{equation}
\label{eqn:chromatographyskeinpercol}
\skeins(W;c(L)) \xleftarrow{\cong} \bigotimes_{i=1}^n \skeinsmon(W; L_{c^{-1}(\lambda_i)})   
\end{equation}
where $\skeinsmon(W; L_{c^{-1}(\lambda_i)})$ denotes the monochromatic subspace
of $\skeins(W; L_{c^{-1}(\lambda_i)})$ in which every lasagna filling has its
surface components colored by the idempotent $e(\lambda_i)$. The isomorphism can
be constructed as in the preceding discussion for $W=B^4$: Given an elementary
tensor of lasagna fillings representing an element on the right-hand side of
\eqref{eqn:chromatographyskeinpercol}, we form a superposition of lasagna
fillings and resolve transverse intersections using $\immballpn$ classes
following Construction~\ref{constr:chromatography} and show, as in the proof of
Proposition~\ref{prop:chromatography}, that this gives a well-defined class of
lasagna filling on the left-hand side. The map thus defined is surjective since
every lasagna filling $F$ representing an element on the left-hand side can be
expressed as a linear combination of superposed fillings. To see this, we expand
all components of the surface of $F$ into linear combinations of their
idempotent-colored components and split all input balls of $F$ into a
superposition of monochromatic input balls using \eqref{eqn:chromatography}.
Conversely, \revise{the candidate inverse map, which sends any superposed filling to the tensor product of its
monochromatic components (after removing the $\immballpn$ labels), is indeed well-defined by Proposition~\ref{prop:chromatography}, since it respects isotopies and local
skein relations, and thus is an inverse to the superposition map defined above.}

Finally we claim that there is an\footnote{The naturality of this isomorphism will be discussed in the following.} isomorphism of $H_2(W\revise{,} L)\times \Z_t$-graded
$\C$-vector spaces:
\[
  \skeinsmon(W; L_{c^{-1}(\lambda_i)})   \cong \skeini(W; L_{c^{-1}(\lambda_i)}, \C)  
\]
As before, it suffices to prove this in the case $W=B^4$, where it reduces to 
\[\KhR_{\defset,\fr}(c(L_{c^{-1}(\lambda_i)})) \cong
\KhR_{N_i,\fr}(L_{c^{-1}(\lambda_i)}, \C)   \] and the existence of such an
isomorphism is proven as a part of Theorem~\ref{thm:deflinks} in
\cite{MR3590355}. However, here we need isomorphisms that are \emph{natural}
under link cobordisms. The proof in \cite{MR3590355} uses that the operation of
coloring all foam sheets with idempotents related to $\lambda_i$ and translating
dots $\bullet \mapsto \bullet-\lambda_i$ constitutes an equivalence between
$\gl(N_i)$-foams and the $\lambda_i$-colored monochromatic subcategory of the
$\defset$-deformed foams. These checks are done for foams constructed via
categorified quantum groups and hence do not involve caps, cups, and saddles.
When taking these additional generators into account, the equivalence requires a
scaling-correction depending on the indices of the involved Morse critical
points. For example, in the $\gl_{N_i}$ theory, an $(N_i-1)$-dotted sphere
evaluates to $1$, whereas a $e(\lambda_i)(\bullet-\lambda_i)^{N_i-1}$-decorated
sphere evaluates to $\revise{\epsilon_{\Sigma,i,j}:=}\prod_{j\neq i}(\lambda_i-\lambda_j)^{-N_j}$ in the
$\defset$-deformed theory. Upon proceeding from the foam theory to the link
homology theory, we see that the theories produce isomorphic vector spaces for
links, but that the \revise{maps associated to link cobordisms $S$ acquire an Euler characteristic-dependent rescaling by $\epsilon_{\Sigma,i,j}^{\chi(S)/2}\in \C$ (after choosing a square root) relative to the $\gl_{N_i}$ theory}. We thus get a natural
isomorphism of link homologies
\[\KhR_{\defset,\fr}(c(L_{c^{-1}(\lambda_i)})) \cong
\KhR^{\mathrm{tw}}_{N_i,\fr}(L_{c^{-1}(\lambda_i)}, \C)   \] where we have
placed $\mathrm{tw}$ as a warning flag to indicate the twisting. Thus we have
established the natural isomorphism \eqref{eq:ThmC} from Theorem C in the
introduction. This now implies an isomorphism of skein modules
\[
  \skeinsmon(W; L_{c^{-1}(\lambda_i)})   \cong \skeinitw(W; L_{c^{-1}(\lambda_i)}, \C)  
\]
and thus \eqref{eqn:chromatographyskein}.
\end{proof}

\subsection{Relative second homology as link homology}

For the following we define a very simple link homology theory\footnote{Since it
is not computed from chain complexes associated to link diagrams, it might be
more accurately called a \emph{homology link theory}.} based on relative second
homology. For a link $L\subset S^3$ of self-linking $f\in \Z$ we denote by 
\[H(L):=\Z^{H_2(B^4)^L}\] the $\Z_q\times \Z_t$-graded rank one free abelian
group $\Z$, concentrated in $\Z_q\times \Z_t$-degree $(-f,f)$, which is
generated by the unique relative second homology class in $H_2(B^4)^L$, namely
the preimage of $[L]$ under the connecting homomorphism. For a link cobordism
$\surf\colon L_0 \to L_1$ in $S^3 \times I$, we associate a grading-preserving
homomorphism between the respective groups as: pick an oriented (unframed!)
smooth surface $S'$ representing the unique class in $H_2(B^4)^{L_0}$ and thus
the canonical generator $[S']$ of $H(L_0)$. Then we define 
\begin{align*}
  H(\surf) \colon H(L_0) &\to H(L_1)\\
[S'] &\mapsto [\surf \circ S']
\end{align*}
This construction \revise{is independent of the choice of $S'$ and manifestly functorial, thus defining} a $\Z_q\times
\Z_t$-graded link homology theory for links in $S^3$. \revise{Note, however, that $H(\surf)$ is also independent of the choice of link cobordism $\surf$.}

\begin{proposition}\label{prop:glone} There exists a natural isomorphism of $\Z_q\times
  \Z_t$-graded link homology theories:
  \[\KhR_{1,\fr}(-) \cong H(-)\]
\end{proposition}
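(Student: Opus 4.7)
The plan is to exploit the fact that for $N=1$ the Robert--Wagner foam evaluation collapses: the Frobenius algebra $\KhR_{1,\fr}(\bigcirc) \cong \Z[X]/(X) \cong \Z$ is the trivial one, with unit, counit, multiplication and comultiplication all identities. As a consequence, the handle element is $1$, every closed genus-$g$ surface evaluates to $1$, and topological modifications of a filling surface (handle-additions, spherical components, tubes) act as the identity on skein classes. This rigidity makes the $\gl_1$ theory genuinely topological, which is exactly what $H(-)$ remembers.

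First I would compute that for any framed oriented link $L\subset S^3$ with self-linking number $f$, the group $\KhR_{1,\fr}(L)$ is a free $\Z$-module of rank one concentrated in bidegree $(-f,f)$, matching that of $H(L)$. The cleanest route is induction on the number of crossings using the relation \eqref{eqn:crossingsnew} specialized to $N=1$: both smoothings of a crossing coincide (there is no $2$-labelled edge for $N=1$), so each crossing contributes only a scalar $q^{\pm 1}t^{\mp 1}$ shift, reducing the computation to the unknot case, for which the Frobenius algebra above gives $\Z$ in the expected bidegree.

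Next I would construct the natural transformation via the lasagna-skein description $\KhR_{1,\fr}(L) \cong \skeinone(B^4;L)$ from \eqref{eqn:B4id}. For a link $L$, pick any oriented surface $S\subset B^4$ with $\partial S = L$; Definition~\ref{def:skeinfromsurface} (after inserting framing-changing input balls of total framing change $[S]\cdot[S] = f$) assigns to $S$ a skein class of bidegree $(-f,f)$, which I would send to the canonical generator $[S] \in H(L)$. Well-definedness follows from the $N=1$ rigidity: two choices of filling surface differ, up to framing-changing input balls, by the addition of closed components and handle-attachments, all of which act trivially in the $\gl_1$ skein. Invertibility reduces to checking non-vanishing of the surface-class, which follows by composing with the cup cobordism $\emptyset \to L$ and using $\KhR_{1,\fr}(\emptyset) = \Z$. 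Naturality under a cobordism $\Sigma \colon L_0 \to L_1$ is then immediate from the lasagna description: the induced map sends $[S_0]_{\mathrm{sk}} \mapsto [\Sigma \circ S_0]_{\mathrm{sk}}$, which matches $H(\Sigma)([S_0]) = [\Sigma \circ S_0]$ on the nose.

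The main obstacle I expect is the careful verification of the topological invariance statements needed in the $\gl_1$ foam theory: one must confirm from the Robert--Wagner evaluation at $N=1$ that closed surfaces of arbitrary genus (with arbitrary dot placements, using that $X$ acts as $0$) evaluate to $1$, and that tubing two filling surfaces together acts as multiplication in the trivial Frobenius algebra. Once this is granted, matching the $(-f,f)$ grading against Definition~\ref{def:skeinfromsurface} is a routine bookkeeping exercise, using the identity $[S]\cdot[S] = f$ for any surface $S\subset B^4$ bounding $L$ with push-off given by the framing.
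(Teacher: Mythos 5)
Your proposal is correct and takes essentially the same route as the paper: both observe that the $\gl_1$ crossing complexes collapse to shifted identity webs (so $\KhR_{1,\fr}(L)$ is free of rank one in bidegree $(-f,f)$), identify a canonical generator by transporting the framed-unknot class along a surface cobordism, and invoke the topological triviality of $\gl_1$ foams for well-definedness and naturality. Two small slips worth correcting: dotted surfaces evaluate to $0$, not $1$, since the dot is multiplication by $X=0$ (the paper sidesteps this by noting that $\gl_1$ foams simply carry no dots); and for invertibility you want to post-compose with a \emph{cap} $L\to\emptyset$ (closing off by a Seifert surface, giving a closed surface that evaluates to $1$), not a cup $\emptyset\to L$ --- neither error affects the substance of the argument.
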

\begin{proof}
  In the $\glone$ homology theory, the crossing formulas simplify to:
  \begin{equation}
    \CC{\begin{tikzpicture}[anchorbase,scale=.2]
      \draw [very thick, ->] (1,-2)to(1,-1.7) to [out=90,in=270] (-1,1.7) to (-1,2);
      \draw [white, line width=.15cm] (-1,-2) to (-1,-1.7) to [out=90,in=270] (1,1.7) to (1,2);
      \draw [very thick, ->] (-1,-2) to (-1,-1.7) to [out=90,in=270] (1,1.7) to (1,2);
    \end{tikzpicture}
    }_\fr
     \quad = \quad
     0 \to
      q^{-1} t\;
    \begin{tikzpicture}[anchorbase,scale=.2]
      \draw [very thick, ->] (1,-2) to (1,2);
      \draw [very thick, ->] (-1,-2) to (-1,2);
    \end{tikzpicture}
    \to 0
    , \qquad
    \CC{\begin{tikzpicture}[anchorbase,scale=.2]
      \draw [very thick, ->] (-1,-2) to (-1,-1.7) to [out=90,in=270] (1,1.7) to (1,2);
      \draw [white, line width=.15cm] (1,-2)to(1,-1.7) to [out=90,in=270] (-1,1.7) to (-1,2);
      \draw [very thick, ->] (1,-2)to(1,-1.7) to [out=90,in=270] (-1,1.7) to (-1,2);
    \end{tikzpicture}
    }_\fr
     \quad = \quad 
     0 \to
     q t^{-1}\;
    \begin{tikzpicture}[anchorbase,scale=.2]
      \draw [very thick, ->] (1,-2) to (1,2);
      \draw [very thick, ->] (-1,-2) to (-1,2);
    \end{tikzpicture}
    \to 0
     \end{equation}
And the $\glone$ foam category underlying the $\glone$ homology has no dots,
seams, or vertices. The sphere and neck-cutting relations take the following simple form:
\begin{equation}
  \label{eqn:glone}
  \raisebox{-0.5\height}{\includegraphics{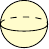}}
  =1 
  \;\; , \;\;
  \raisebox{-0.5\height}{\includegraphics{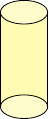}} 
  \; = \;
  \raisebox{-0.5\height}{\includegraphics{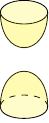}}
\end{equation}
Moreover, all cobordisms preserve the $\Z_q\times \Z_t$ bidegree. From this
description, it is clear that the $\glone$ homology of any link diagram is free of
rank one, concentrated in bidegree $(-w,w)\in \Z_q\times \Z_t$, where $w\in \Z$
denotes the writhe of the diagram: the difference of positive and negative
crossings. For a blackboard-framed diagram, the writhe computes the self-linking
number $w=f$.

As for cobordism maps, the Reidemeister II and III moves as well as the framed
Reidemeister I moves are implemented by chain maps given by identity foams,
concentrated in a single degree. The Morse generators for cobordisms are simply
given by composition by the corresponding surfaces, interpreted as foams. By the
relations \eqref{eqn:glone}, cobordism maps in the link homology theory
$\KhR_{1,\fr}(-)$ only depend on the relative second homology class represented
by the cobordism\revise{, and this is determined by the source and target links}.

Let $v_{\circ}$ denote the canonical class in the $\glone$ homology of the
$f$-framed unknot. (It can be represented by $|f|$ insertions of framing-changing
input balls $\frameballpn$ according to the sign of $f$ and it is clearly of
minimal $q$-degree.) For any link $L$ of self-intersection $f$, consider a
connected Seifert surface $\surf^L$, remove a disk and interpreted the result
$\surf^L_\circ$ as a link cobordism $\surf$ from the $f$-framed unknot to $L$.
By the discussion above, we obtain a canonical generator
$v_{L}:=\KhR_{1,\fr}(\surf^L_\circ)(v_{\circ})\in \KhR_{1,\fr}(L)$ that is
independent of the choice of $\surf$. The claimed natural isomorphism of link
homologies is now given by 

\begin{align*}
  \KhR_{1,\fr}(L) &\xrightarrow{\cong} H(L)\\
  c\cdot v_L &\mapsto c\cdot[\surf^L]
\end{align*}
for $c\in \Z$.
\end{proof}

By virtue of Proposition~\ref{prop:glone}, the link homology theory $H(-)$ also
satisfies the requirements of Theorem~\ref{thm:skeinfromLH} and hence gives rise
to skein modules for $4$-manifolds $W$ and links $L\subset \partial W$. In fact,
by the natural isomorphism, these skein modules are isomorphic to
$\skeinone(W;L)$. In the following theorem, we will obtain an alternative
characterization, purely in terms of second homology. To this end, we define
$H(W,L)$ to be the $H_2(W)^L\times \Z_q\times \Z_t$-graded free abelian group
freely generated by the relative second homology classes bounding $L$, i.e. by
$H_2(W)^L$, with classes $\alpha\in H_2(W)^L$ considered homogeneous of
tridegree $(\alpha, -\alpha\cdot \alpha, \alpha \cdot \alpha)$.

\begin{theorem}\label{thm:oneskein} Let $W$ be a $4$-manifold and link $L\subset \partial W$ a link
  in the boundary. Then we have an isomorphism of $H_2(W\revise{,} L)\times
  \Z_q \times \Z_t$-graded abelian groups:
  \[\skeinone(W;L)\cong H(W,L)\]
\end{theorem}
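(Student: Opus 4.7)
The plan is to reduce the theorem to showing that the skein module built from the link homology theory $H(-)$ equals $H(W,L)$, using the natural isomorphism of link homology theories from Proposition~\ref{prop:glone}. Since $H(L)$ is free of rank one (on the class $v_L$ of any Seifert surface for $L$) and cobordism maps in $H$-theory depend only on the homology class of the cobordism, this reduction makes the computation entirely homological.

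First, I would construct a forward map $\Phi\colon \skeinone(W,L) \to H(W,L)$. Given a lasagna filling with surface $\surf$ and input data $(B_i, L_i, v_i)$, write each label as $v_i = c_i \cdot v_{L_i}$ with $c_i \in \Z$ and set
\[
\Phi(F) = \Bigl(\prod_i c_i\Bigr) \cdot [\overline{\surf}] \in H(W,L),
\]
where $\overline{\surf}$ is the surface obtained from $\surf$ by capping each $L_i$ with a chosen Seifert surface, giving the canonical class in $H_2(W)^L$. Multilinearity in the labels is built in and invariance under isotopy is automatic. For the lasagna equivalence—replacing an input ball by a lasagna filling of a $4$-ball whose $H$-evaluation equals the label—both the scalar and the capped homology class are preserved, so $\Phi$ descends. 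The grading verification amounts to matching the contribution $(-f_{L_i},+f_{L_i})$ from each label with the self-intersection of $\overline{\surf}$, taking into account the framing-twists introduced when gluing Seifert caps (framing $0$) to surfaces with boundary framing $f_{L_i}$.

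Next, define the candidate inverse $\Psi\colon H(W,L) \to \skeinone(W,L)$ by sending $\alpha \in H_2(W)^L$ to the class $[\surf_\alpha]$ of a smoothly embedded framed representative $\surf_\alpha$, with framing-changing input balls inserted to match the boundary framing as in Definition~\ref{def:skeinfromsurface}. The crucial point is independence of the choice of representative: any two framed representatives $\surf_\alpha, \surf'_\alpha$ of the same class with prescribed boundary framing on $L$ are cobordant through an embedded framed $3$-manifold in $W \times [0,1]$, obtained by smoothing the $3$-chain witnessing $[\surf_\alpha] = [\surf'_\alpha]$. A Morse decomposition of this cobordism expresses the passage from $\surf_\alpha$ to $\surf'_\alpha$ as a sequence of local moves, each of which is implemented in the skein module by a $H$-theoretic relation: births and deaths of $2$-spheres are handled by the sphere-to-$1$ relation, while $1$- and $2$-handles are handled by the neck-cutting relation of \eqref{eqn:glone}. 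Checking $\Phi\circ \Psi = \mathrm{id}$ is then immediate on generators, and $\Psi\circ \Phi = \mathrm{id}$ follows by first expanding the input balls of any filling into extreme (input-free) fillings via \eqref{eqn:B4id} and then applying the well-definedness of $\Psi$.

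The principal obstacle is verifying the well-definedness of $\Psi$—that the skein relations in $\skeinone(W,L)$, inherited via Proposition~\ref{prop:glone} from the simple relations of $H$-theory, suffice to identify any two framed representatives of a relative homology class. This requires a careful Morse-theoretic analysis of embedded $3$-dimensional cobordisms inside $W \times [0,1]$, together with bookkeeping of framing corrections when the self-intersection is nonzero, where the use of framing-changing input balls from Definition~\ref{def:skeinfromsurface} is essential to absorb the discrepancy between surface framings and push-off framings.
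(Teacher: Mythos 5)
Your proof proposal is correct and takes essentially the same route as the paper. You construct the two mutually inverse maps in the opposite order (defining $\Phi\colon \skeinone(W,L)\to H(W,L)$ first, then $\Psi$), but the substance is identical: $\Psi$ sends $\alpha$ to any framed representative surface with framing-changing input balls, and $\Phi$ uses the rank-one structure of $\KhR_{1,\fr}(L_i)$ to reduce any lasagna filling to a scalar times a homology class. Where the paper tersely asserts that "any other representative $\surf'$ of $\alpha$ differs by an isotopy, a finite sequence of neck-cutting moves and creations and deletions of $S^2$-components," you unpack this as a Morse decomposition of an embedded $3$-dimensional cobordism inside $W\times[0,1]$ whose handle attachments realize exactly those local moves and are therefore absorbed by the $\glone$ skein relations; this is the argument the paper implicitly relies on, so the added detail is welcome, though the framing bookkeeping for nonzero self-intersection (which you flag as a remaining subtlety) is treated with essentially the same level of rigor in both.
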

\begin{proof}
Let $\surf$ be a surface representing a given homology class $\alpha \in
H_2(W)^L$.
As discussed in Section~\ref{sec:skeinsfromsurfaces}, this gives rise to a class
$[\surf]\in \skeinone(W;L)$ of tridegree $(\alpha, -\alpha\cdot \alpha, \alpha
\cdot \alpha)$. Any other representative $\surf'$ of $\alpha$ differs by an
isotopy, a finite sequence of neck-cutting moves and creations and deletions of
$S^2$-components. In the $\glone$ foam category and thus in the skein module
$\skeinone(W;L)$, these moves are among the skein relations and hence imply
$[\surf]=[\surf']$. Thus we have a well-defined assignment $[\surf] \mapsfrom
\alpha$, which linearly extends to a morphism of $H_2(W\revise{,} L)\times \Z_q \times
\Z_t$-graded abelian groups:
\[
\skeinone(W;L) \leftarrow H(W,L)
\]
Conversely, the proof of Proposition~\ref{prop:glone} shows that every lasagna filling
$F$ in $\skeinone(W;L)$ can be represented by a multiple, with scalar denoted
$c(F)$, of a surface $\surf(F)$ with framing-changing input balls inserted
(but no other input balls). The surface $\surf(F)$ may be chosen
freely in its relative second homology class $\alpha \in H_2(W\revise{,} L)$ (by skein
relations) and the total framing correction is determined by $\alpha\cdot
\alpha$. The assignment $F\mapsto c(F) \alpha$ is then a 2-sided inverse to \revise{the} map constructed above.
\end{proof}

  Let $d\in \C^\times$. In complete analogy to the discussion
in Proposition~\ref{prop:glone}, one can define a \emph{twisted} version of $\glone$ link
homology based on the skein relations:
  \begin{equation}
    \raisebox{-0.5\height}{\includegraphics{thinsphere}}
    =1/d 
    \;\; , \;\;
    \raisebox{-0.5\height}{\includegraphics{thincylinder}} 
    \; = \; 
    d 
    \raisebox{-0.5\height}{\includegraphics{capcup}}
    %   \thinsphere[.4]=1/d \;\; , \;\;
  % \thincylinder[.4] \; = \; d \grbamboo[.4]
  \end{equation}
  This results in a theory that is isomorphic to the $\glone$ theory, although
  not naturally, since cobordism maps have to be rescaled by a power of $d$. In fact, for an element $\lambda_i\subset
  \defset$ of multiplicity $N_i=1$, the $\lambda_i$-monochromatic subtheory of
  the $\defset$-deformed link homology is isomorphic to the twisted
  $\glone$-theory with parameter $d=\prod_{j\neq i}(\lambda_i-\lambda_j)^{N_j}$,
  see the proof of Theorem~\ref{thm:defskein}. We will denote the associated
  link homology theory based on relative second homology, but defined over $\C$,
  by $H^\mathrm{tw}(L)$ if the scalar $d$ is clear from the context or
  irrelevant. Similarly, we will write $H^\mathrm{tw}(W\revise{,}L,\C)$ for the
  associated skein modules.

\begin{corollary} \label{cor:completedeformation} Consider a set of deformation
parameters $\defset:=\{\lambda_1,\dots,\lambda_N\}\subset \C$ with multiplicities
$N_i=1$. Then we have an isomorphism $H_2(W\revise{,} L)\times
\Z_q \times \Z_t$-graded $\C$-vector spaces:
\begin{equation}
  \label{eqn:chromatographyGornikskein}
\skeins(W;L) \cong \bigoplus_{c\in C(L,\defset)} \bigotimes_{i=1}^N H^{\mathrm{tw}}(W, L_{c^{-1}(\lambda_i)},\C)   
\end{equation}
Moreover, given an oriented surface $S\subset W$ with boundary $L$ and no closed
components, this defines an element of the left-hand side (see
Definition~\ref{def:skeinfromsurface}), whose corresponding $c$-summand on the
right hand side is given by the tensor product $[S_{c^{-1}(\lambda_1)}]\otimes
\cdot \otimes [S_{c^{-1}(\lambda_N)}]$ of the relative second homology classes
represented by the monochromatic components of $S$ with respect to the coloring
induced by $c$ from the boundary.
  \end{corollary}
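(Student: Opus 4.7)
The first assertion is a direct specialization of Theorem~\ref{thm:defskein} to the case $N_i=1$ for every $i$: each tensor factor $\skeinitw(W, L_{c^{-1}(\lambda_i)}, \C)$ then equals a twisted $\gl_1$ skein module, and by Theorem~\ref{thm:oneskein} applied to the twisted $\gl_1$ theory (with twisting parameter $d=\prod_{j\neq i}(\lambda_i-\lambda_j)$, as described in the paragraph preceding the corollary) it is naturally isomorphic to $H^{\mathrm{tw}}(W, L_{c^{-1}(\lambda_i)},\C)$. The twisted version of Theorem~\ref{thm:oneskein} is established by repeating its proof verbatim, noting that the scalar $d$ is invertible, so the sphere- and neck-cutting relations merely acquire a power of $d$ that depends only on the Euler characteristic, and hence only on the relative second homology class of a representing surface.

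For the second assertion, the plan is to trace the class $[S]\in \skeins(W;L)$ through the chromatography isomorphism from right to left, in three steps. First, apply Lemma~\ref{lem:onecolattime} to decompose $[S]=\sum_c [S]_c$ over colorings $c\in C(L,\defset)$, where $[S]_c$ is represented by the same underlying lasagna filling with every component of $S$ labeled by the idempotent $e(\lambda_i)$ matching the color $c$ assigns to the boundary. The hypothesis that $S$ has no closed components is used precisely here: every component of $S$ meets $L$ and hence has its color uniquely determined by $c$.

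Second, apply the inverse of the chromatography map (Proposition~\ref{prop:chromatography}, extended to arbitrary $W$ as in the proof of Theorem~\ref{thm:defskein}) to the $c$-summand. The key observation is that since $S$ is embedded in $W$, the monochromatic sub-surfaces $S_{c^{-1}(\lambda_i)}$ are pairwise disjoint embedded surfaces. Thus the collection $\{S_{c^{-1}(\lambda_i)}\}$ is already transverse in the sense of Construction~\ref{constr:chromatography}, no intersection points arise, and no $\immballpn$ insertions are needed: the superposition $F_{\cup i}$ is exactly $[S]_c$. It follows that the inverse chromatography map sends $[S]_c$ to the elementary tensor $\bigotimes_i [S_{c^{-1}(\lambda_i)}]$ in $\bigotimes_i \skein_{0,\fr}^{1,\mathrm{tw}}(W, L_{c^{-1}(\lambda_i)},\C)$. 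Third, apply the twisted version of Theorem~\ref{thm:oneskein} to identify each factor $[S_{c^{-1}(\lambda_i)}]$ in the twisted $\gl_1$ skein module with the corresponding relative second homology class in $H^{\mathrm{tw}}(W, L_{c^{-1}(\lambda_i)},\C)$.

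The main subtlety I anticipate is the bookkeeping for framing-changing input balls and the $t$-degree. Recall from Definition~\ref{def:skeinfromsurface} that $[S]\in\skeins(W;L)$ is defined only after inserting framing-changing balls with total framing shift $[S]\cdot[S]$. Because the monochromatic sub-surfaces are pairwise disjoint, we have $[S]\cdot[S]=\sum_i [S_{c^{-1}(\lambda_i)}]\cdot [S_{c^{-1}(\lambda_i)}]$, so the framing-changing balls can be distributed consistently: each $S_{c^{-1}(\lambda_i)}$ receives exactly the framing shifts needed to produce its own class $[S_{c^{-1}(\lambda_i)}]$ in the twisted $\gl_1$ skein module. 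This matches the $t$-degree $\deg_t(S)=\sum_i \deg_t(S_{c^{-1}(\lambda_i)})$ on both sides and confirms that the naive tensor product identification is the correct one.
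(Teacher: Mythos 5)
Your proof is correct and follows the same route the paper intends: specialize Theorem~\ref{thm:defskein} to $N_i=1$ and invoke the twisted version of Theorem~\ref{thm:oneskein}. The paper's own proof is a one-line pointer ("this follows from \eqref{eqn:chromatographyskein} and the identification of twisted $\glone$ theories in terms of twisted relative second homology") and addresses only the isomorphism; you have usefully spelled out the second assertion as well, by tracing the surface class through the chromatography map. The key observations you make — that the absence of closed components forces each component's color to be pinned down by $c$; that embeddedness of $S$ makes the monochromatic pieces disjoint so no $\immballpn$ insertions arise and the superposed filling is literally $[S]_c$; and that disjointness gives the additivity $[S]\cdot[S]=\sum_i [S_{c^{-1}(\lambda_i)}]\cdot [S_{c^{-1}(\lambda_i)}]$ needed for the framing-change and $t$-degree bookkeeping — are exactly the points that make the second assertion go through and that the paper leaves implicit.
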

  \begin{proof}
    This follows from \eqref{eqn:chromatographyskein} and the identification of
    (twisted) $\glone$ theories in terms of (twisted) relative second homology. 
  \end{proof}

\begin{corollary} \label{cor:nonvanishingdef} Consider a set of deformation
  parameters $\defset:=\{\lambda_1,\dots,\lambda_N\}\subset \C$ with
  multiplicities $N_i=1$. Given an oriented surface $S\subset W$ with boundary
  $L$ and no closed components, then the associated lasagna filling in $\skeins(W;L)$ is nonzero.
\end{corollary}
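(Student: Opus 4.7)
The plan is to apply Corollary~\ref{cor:completedeformation} directly and to exhibit a single coloring $c \in C(L,\defset)$ whose corresponding summand of $[S]$ in the decomposition \eqref{eqn:chromatographyGornikskein} is visibly nonzero. Because $S$ has no closed components, each component of $S$ has nonempty boundary in $L$, and this yields a partition $\pi_0(L) = \bigsqcup_{\sigma \in \pi_0(S)} \pi_0(\partial \sigma)$. I would choose any map $\tilde c \colon \pi_0(S) \to \defset$ (for instance the constant map at $\lambda_1$, though any choice works) and let $c \in C(L,\defset)$ be the induced coloring of the components of $L$, where each boundary component inherits the color of the surface component bounding it.

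By construction this $c$ is induced from a coloring of the surface components, so the monochromatic pieces $S_{c^{-1}(\lambda_i)} := \tilde c^{-1}(\lambda_i)$ are well-defined oriented subsurfaces of $S$ with boundary $L_{c^{-1}(\lambda_i)}$. The second half of Corollary~\ref{cor:completedeformation} then identifies the $c$-summand of the class of $S$ in the decomposition with the elementary tensor
\[
\bigotimes_{i=1}^{N} [S_{c^{-1}(\lambda_i)}] \;\in\; \bigotimes_{i=1}^{N} H^{\mathrm{tw}}(W, L_{c^{-1}(\lambda_i)}, \C).
\]

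The final step is to observe that this tensor is nonzero. By Theorem~\ref{thm:oneskein}, scalar-extended to $\C$ and twisted as in the discussion preceding Corollary~\ref{cor:completedeformation}, each factor $H^{\mathrm{tw}}(W, L_{c^{-1}(\lambda_i)}, \C)$ is the $\C$-vector space freely generated by the torsor $H_2(W)^{L_{c^{-1}(\lambda_i)}}$, and $[S_{c^{-1}(\lambda_i)}]$ is one of these basis elements (represented by the genuine monochromatic subsurface, or, if $\tilde c$ omits $\lambda_i$, by the empty surface representing $0$). Each tensor factor is therefore nonzero, so the elementary tensor is nonzero in the tensor product of $\C$-vector spaces, and hence $[S] \neq 0$ in $\skeins(W;L)$. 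There is essentially no obstacle left: all the heavy lifting lies in Theorem~\ref{thm:defskein}, Theorem~\ref{thm:oneskein}, and Corollary~\ref{cor:completedeformation}, and the \emph{no closed components} hypothesis is used only to ensure that a coloring of $L$ compatible with a coloring of the surface pieces exists.
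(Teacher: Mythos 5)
Your proof is correct and expands precisely the paper's one-line argument (``Immediate from Corollary~\ref{cor:completedeformation}''): choose an induced coloring, read off the $c$-summand as an elementary tensor of basis elements, and conclude nonvanishing. One slight imprecision in your final sentence: the no-closed-components hypothesis is used not merely so that a compatible coloring of $L$ exists (such a coloring always exists, since closed components of $S$ impose no constraint on the coloring of $L$), but rather so that the $c$-summand of $[S]$ is a \emph{single} elementary tensor rather than a sum over colorings of the closed components; closed components introduce a sum that could in principle cancel, which is exactly the issue that the homological-diversity hypothesis in Proposition~\ref{prop:techcond} is designed to rule out.
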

\begin{proof} Immediate from Corollary~\ref{cor:completedeformation}.
\end{proof}

In the following we will see that the assumption in
Corollary~\ref{cor:nonvanishingdef} on $S$ having no closed components can be
weakened. Note, however, that we do have to exclude certain closed surfaces
contained in $4$-balls inside of $W$, which would already evaluate to zero
locally. 

\begin{proposition}
  \label{prop:techcond}
Retain the assumptions on $\surf$ from Corollary~\ref{cor:nonvanishingdef}.
For an oriented surface $S\subset W$ the associated lasagna filling in
$\skeins(W;L)$ is nonzero provided the following assumption is satisfied:
\begin{center}
  (Homological diversity) No union of a \revise{nonempty} subset of closed connected
  components of $S$ is nullhomologous in $W$.
\end{center}
\end{proposition}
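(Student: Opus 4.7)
The plan is to apply the chromatography decomposition from Corollary~\ref{cor:completedeformation} and isolate a single summand of the class $[S]\in\skeins(W;L)$ whose contribution cannot be cancelled by any other term.

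First I would fix a coloring $c\in C(L,\Sigma)$ that is compatible with $S$, meaning $c$ takes a single value on the boundary components of $L$ lying in each given connected non-closed component of $S$. Such $c$ exists: pick any assignment of colors to the components of $S$ with non-empty boundary and induce a coloring of $\pi_0(L)$ via the natural map $\pi_0(L)\to\pi_0(S)$. Under Corollary~\ref{cor:completedeformation}, the class $[S]$ projects to the $c$-summand as a finite sum
\[
[S]_c\;=\;\sum_{c'}\mu(c,c')\,\bigotimes_{i=1}^{N}[S^{(\lambda_i)}_{c,c'}]\;\in\;\bigotimes_{i=1}^{N} H^{\mathrm{tw}}(W,L_{c^{-1}(\lambda_i)},\C),
\]
indexed by colorings $c'\colon\pi_0(S_{\mathrm{cl}})\to\Sigma$ of the set $S_{\mathrm{cl}}$ of closed connected components of $S$. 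Here $[S^{(\lambda_i)}_{c,c'}]$ denotes the basis generator of $H^{\mathrm{tw}}(W,L_{c^{-1}(\lambda_i)},\C)$ in the relative second homology grading $\alpha_i(c,c')\in H_2(W)^{L_{c^{-1}(\lambda_i)}}$ determined by the union of components assigned color $\lambda_i$ by the combined coloring $(c,c')$, and the scalar $\mu(c,c')\in\C^{\times}$ arises from the twisted $\gl_1$ identification at the end of the proof of Theorem~\ref{thm:defskein} as a product of nonzero powers of the scalars $d_i=\prod_{j\neq i}(\lambda_i-\lambda_j)$.

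The main step is to choose a coloring $c'$ whose tuple $(\alpha_1(c,c'),\ldots,\alpha_N(c,c'))$ is not realised by any other $c''$. Once this is done, the tensor basis element indexed by this tuple appears in $[S]_c$ with nonzero coefficient $\mu(c,c')$ and cannot be cancelled, so $[S]_c$, and hence $[S]$, is nonzero. I would take $c'\equiv\lambda_1$, the constant coloring sending every closed component of $S$ to $\lambda_1$. If a second coloring $c''$ gave the same tuple, then for each $i\neq 1$ one would have
\[
\sum_{j\,:\,c''(T_j)=\lambda_i}[T_j]\;=\;0\quad \text{in } H_2(W),
\]
where $T_1,\ldots,T_k$ denote the closed components of $S$. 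The homological diversity hypothesis forces $\{j\,:\,c''(T_j)=\lambda_i\}=\emptyset$ for every $i\neq 1$, so $c''\equiv\lambda_1=c'$.

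The hardest part to verify carefully is the description of the scalar $\mu(c,c')$: the twisted $\gl_1$ theories used in Corollary~\ref{cor:completedeformation} are isomorphic to the untwisted ones only after rescalings depending on the Euler characteristics of the monochromatic sub-surfaces, but all rescaling factors are nonzero powers of the $d_i$, so the resulting product $\mu(c,c')$ is nonzero for every $c'$. Granted this, everything reduces to the purely combinatorial unique-tuple argument above, which uses the homological diversity hypothesis in precisely the stated form and nothing stronger.
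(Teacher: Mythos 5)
Your proof is correct and follows the same broad strategy as the paper: decompose $[S]$ via the chromatography isomorphism of Corollary~\ref{cor:completedeformation}, then locate a nonzero contribution by examining the grading by $\prod_{i=1}^N H_2(W, L_{c^{-1}(\lambda_i)})$. In fact, your argument is slightly more careful than the paper's at one point. The paper asserts that homological diversity forces \emph{all} summands (indexed by colorings $c'$ of the closed components) to occupy distinct grading degrees, and concludes that the full sum cannot vanish. That claim is not quite true as stated: if, say, $N=2$ and $S$ has two closed components $T_1, T_2$ both representing the same nonzero class $\beta$ with $2\beta\neq 0$ (a homologically diverse configuration), then the colorings $(\lambda_1,\lambda_2)$ and $(\lambda_2,\lambda_1)$ of $(T_1,T_2)$ land in the same grading tuple $(\beta,\beta)$. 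Your refinement — singling out the constant coloring $c'\equiv\lambda_1$ and showing it is the \emph{unique} coloring realising its grading tuple, by applying homological diversity to the subsets $\{j : c''(T_j)=\lambda_i\}$ for each $i\neq 1$ — sidesteps this and is exactly the observation needed to close the argument. Your remark about the scalar $\mu(c,c')$ is also right: it is a product of powers of the nonzero $d_i=\prod_{j\neq i}(\lambda_i-\lambda_j)$, hence nonzero.
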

\begin{proof}
\revise{Fix a 
color $\lambda\in\Sigma$ and let
$c \in C(L,\Sigma)$ be the corresponding ``monochromatic" (constant) coloring.
The $c$-summands of the}
right-hand sides of \eqref{eqn:chromatographyskein} and 
  \eqref{eqn:chromatographyGornikskein} are graded by $\revadd{\prod_{i=1}^N} H_2(W,
  L_{c^{-1}(\lambda_i)})$. The lasagna filling determined by $S$ is the sum over
  the fillings obtained by coloring \emph{all} connected components by
  idempotents $e(\lambda_i)$ (not just the non-closed components). Each of these
  summands is nonzero by the same argument as in
  Corollary~\ref{cor:completedeformation}. The additional assumption of
  homological diversity guarantees that 
  \revise{the monochromatic $c$-colored contribution to the sum is the only one living in its homogeneous component with respect to the $\prod_{i=1}^N H_2(W,
  L_{c^{-1}(\lambda_i)})$-grading}
  and therefore cannot be cancelled by other terms in the sum.
%  all such summands live in distinct
%  homogeneous components with respect to the $\bigotimes_{i=1}^N H_2(W,
%  L_{c^{-1}(\lambda_i)})$-grading and hence cannot sum to zero.
\end{proof}

\revise{Homological diversity is a convenient sufficient condition for non-vanishing, but it is straightforward to envision alternative conditions, where mild variations of this proof apply.}

\subsection{Non-vanishing modulo torsion}
\label{sec:nonvanishing} 
Let $\defset=\{\lambda_1^{N_1},\dots \lambda_n^{N_n}\}$ be a multiset of complex
  deformation parameters. Recall the bimodule $R_\defset$ from \eqref{eqn:RSigma}, which
  is used to relate the equivariant and $\defset$-deformed unknot homologies.

\begin{lemma}\label{lem:equivtodef}  There exists a natural transformation of link homology theories 
  \[
    R_\defset\otimes \KhR_{\GLN,\fr}(-) \Rightarrow \KhR_{\defset,\fr}(-)   
  \]
\end{lemma}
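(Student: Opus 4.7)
The plan is to construct the natural transformation at the chain level and then descend to homology. Recall from the proof of Theorem~\ref{thm:examplesofLH} that both $\KhR_{\GLN,\fr}$ and $\KhR_{\defset,\fr}$ are computed from a common chain-level assignment $L \mapsto C\KhR_{\GLN,\fr}(L)$, valued in chain complexes of graded free $R_\GLN$-modules with $R_\GLN$-linear differentials, via
\[
\KhR_{\GLN,\fr}(L) = H_*\bigl(C\KhR_{\GLN,\fr}(L)\bigr), \qquad \KhR_{\defset,\fr}(L) = H_*\bigl(R_\defset \otimes_{R_\GLN} C\KhR_{\GLN,\fr}(L)\bigr).
\]
Any framed link cobordism $\surf \colon L_0 \to L_1$ is represented by an $R_\GLN$-linear chain map $C\KhR_{\GLN,\fr}(\surf)$, well-defined up to chain homotopy, and the induced map on $\KhR_{\defset,\fr}$ is obtained from the same chain map by tensoring with $R_\defset$ over $R_\GLN$ before passing to homology.

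First I would define, for each link $L$, the component of the natural transformation as the universal coefficient map
\[
\eta_L \colon R_\defset \otimes_{R_\GLN} H_*\bigl(C\KhR_{\GLN,\fr}(L)\bigr) \longrightarrow H_*\bigl(R_\defset \otimes_{R_\GLN} C\KhR_{\GLN,\fr}(L)\bigr),
\qquad r \otimes [c] \longmapsto [r \otimes c].
\]
This is well-defined because the differential on $C\KhR_{\GLN,\fr}(L)$ is $R_\GLN$-linear, so tensoring with any $r \in R_\defset$ commutes with $d$ and sends cycles to cycles and boundaries to boundaries. Naturality under a cobordism $\surf \colon L_0 \to L_1$ is then automatic: the very same $R_\GLN$-linear chain map $C\KhR_{\GLN,\fr}(\surf)$ computes both $\KhR_{\GLN,\fr}(\surf)$ (after taking $H_*$) and $\KhR_{\defset,\fr}(\surf)$ (after applying $R_\defset \otimes_{R_\GLN} -$ and then $H_*$), so the corresponding naturality square commutes by functoriality of $H_*$. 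Invariance under chain homotopies is inherited for the same reason, so $\eta_L$ is independent of the choices made.

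I would then verify compatibility with the bi-grading/filtration and with the remaining axioms from Theorem~\ref{thm:skeinfromLH}. Since $R_\defset$ is concentrated in $q$-degree zero but only filtered (via its quotient presentation from $\kb \otimes_\Z R_\GLN$), the map $\eta_L$ is strictly $\Z_t$-graded and sends the $q$-grading on the source to the $q$-filtration on the target. Lax monoidality under disjoint union and the sweep-around move are both implemented at the chain level by $R_\GLN$-linear structure maps, so $\eta$ commutes with them tautologically; similarly for the $2\pi$-rotation axiom.

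The only subtlety worth flagging is that $\eta_L$ is generally \emph{not} an isomorphism, because $R_\defset$ need not be flat over $R_\GLN$ (compare the usual universal coefficient spectral sequence). The lemma only asserts existence of a natural transformation, and this is exactly what is needed in Section~\ref{sec:nonvanishing}: it will let us push the surface class in $\skeine(W;L)$ to a class in $\skeins(W;L)$, whose non-vanishing for suitable $\defset$ (via Proposition~\ref{prop:techcond}) then forces the original class in $\skeine(W;L)$ to be nontrivial modulo torsion. No substantive computation is required for the lemma itself.
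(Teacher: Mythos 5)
Your proof is correct and takes the same route as the paper: both define the natural transformation via the universal coefficient map at the chain level, exploiting that $\KhR_{\GLN,\fr}$ and $\KhR_{\defset,\fr}$ are computed from the same chain complexes and chain maps before and after tensoring with $R_\defset$. The paper's proof is a terse two-sentence version of your argument; your additional remarks on grading compatibility and non-flatness are accurate elaborations rather than departures.
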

\begin{proof}
  The equivariant theory $\KhR_{\GLN,\fr}(-)$ is computed as the homology of
  chain complexes of free $R_\GLN$-modules associated to link diagrams and chain
  maps associated to movies of link cobordisms. The $\defset$-deformed theory
  arises as the homology of the same chain complexes and chain maps, with
  coefficients reduced to $\C$ by tensoring with the bimodule $R_\defset$, i.e.
  by specializing the equivariant parameters. The universal coefficient theorem
  thus yields the desired natural transformation.
\end{proof}

\begin{proposition}\label{prop:equivtodef} There exists a $H_2(W\revise{,} L)\times
\Z_t$-grading preserving linear map
  \begin{equation}
    \label{eqn:equivtodef}
    R_\defset\otimes \skeine(W;L) \to \skeins(W;L)
  \end{equation}
  which sends classes associated to oriented surfaces $S\subset W$ with boundary
  $L$ in $\skeine(W;L)$ to the analogous classes in $\skeins(W;L)$.
\end{proposition}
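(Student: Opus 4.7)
The plan is to construct $\Phi$ in \eqref{eqn:equivtodef} directly on lasagna generators by applying the natural transformation $\eta$ of Lemma~\ref{lem:equivtodef} pointwise to the labels on input balls. Concretely, given $r \in R_\defset$ and a lasagna filling $F$ of $(W;L)$ with input balls $(B_i, L_i, v_i)$ and $v_i \in \KhR_{\GLN,\fr}(\partial B_i, L_i)$, define
\[
\Phi(r \otimes F) := F',
\]
where $F'$ has the same underlying surface and input balls as $F$, with the label on a distinguished input ball $B_1$ replaced by $\eta_{\partial B_1, L_1}(r \otimes v_1) \in \KhR_{\defset,\fr}(\partial B_1, L_1)$ and each other label replaced by $\eta_{\partial B_i, L_i}(1 \otimes v_i)$. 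Independence from the choice of $B_1$ is automatic: since $\eta$ is an $R_\defset$-linear natural transformation, $\Phi$ transports the $R_\defset$-scalar freely across any cobordism that is absorbed into a neighbouring input ball.

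The key step is to verify that $\Phi$ descends to the lasagna equivalence relation. Recall that this relation is generated by replacing an input ball $(B_1, L_1, v_1)$ by a sub-lasagna filling $F_3$ of a $4$-ball whose lasagna-operad evaluation in $\KhR_{\GLN,\fr}$ equals $v_1$. That evaluation is computed from the input labels of $F_3$ by a composite of tensor products and cobordism maps in the link homology theory. Because $\eta$ is monoidal (the bimodule $R_\defset$ is used as a base change) and natural under link cobordisms, applying $\eta$ commutes with each step of this evaluation. Hence the lasagna-operad evaluation of the $\eta$-relabeled $F_3$ in $\KhR_{\defset,\fr}$ is precisely $\eta_{\partial B_1, L_1}(1 \otimes v_1)$, so the two sides of the relation map to equivalent lasagna fillings in $\skeins(W;L)$. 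The same argument handles the ambient isotopy part of the equivalence relation, since $\eta$ is invariant under link isotopy.

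Gradings and the surface compatibility are then routine checks. The map $\Phi$ preserves the $H_2(W;L)$-grading (the surface is unchanged) and the $\Z_t$-grading (both $\eta$ and the cobordism-map contributions to $\Z_t$-degree are built from the same chain-level structure; the only coarsening is on $\Z_q$, which becomes a filtration on the deformed side). For a surface $\surf \subset W$ with $\partial \surf = L$, its class in $\skeine(W;L)$ is represented by a lasagna filling built from $\surf$ by inserting framing-changing input balls labeled by the distinguished equivariant classes $\frameballpn$, constructed from the Reidemeister~I chain map applied to the unknot's disk class. Since $\eta$ is induced by base change and is compatible with these chain-level constructions, $\eta(1 \otimes \frameballpn)$ is the corresponding distinguished class in $\KhR_{\defset,\fr}$ used to define surface classes in $\skeins(W;L)$, so $\Phi$ sends the $\skeine$-surface class of $\surf$ to the $\skeins$-surface class of $\surf$. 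The main obstacle in the argument is the descent to the equivalence relation, which is handled by the naturality and monoidality of $\eta$ provided by Lemma~\ref{lem:equivtodef} and the construction of $\skeins$ via Theorem~\ref{thm:skeinfromLH}.
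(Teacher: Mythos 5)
Your argument is correct and takes essentially the same route as the paper: define the map on lasagna fillings by applying the natural transformation of Lemma~\ref{lem:equivtodef} to input-ball labels, then invoke the naturality (and compatibility with disjoint union and base change) of that transformation to show the map respects the lasagna equivalence relation and descends to skein modules. You spell out the descent argument, the grading compatibility, and the tracking of framing-changing labels $\frameballpn$ under $\eta$ in somewhat more detail than the paper does, but the underlying idea is the same.
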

\begin{proof}
  The map is defined on lasagna fillings by applying the components of the
  natural transformation from \eqref{lem:equivtodef} to the labels of the input
  balls. It follows from the naturality of the transformation that this
  assignment respects skein relations and hence descends to a well-defined
  linear map on the level of skein modules.
\end{proof}

\begin{corollary}
Any homologically diverse oriented surface $S\subset W$ with boundary $L$
defines an class $[S]$ in the $R_\GLN$-module $\skeine(W;L)$ that is non-trivial
modulo torsion.
\end{corollary}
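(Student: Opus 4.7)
The plan is to argue by contradiction, passing from the equivariant skein module $\skeine(W;L)$ to sufficiently generic deformed skein modules $\skeins(W;L)$ and invoking the non-vanishing statement provided by Proposition~\ref{prop:techcond}. Suppose $[S]$ were a torsion element of $\skeine(W;L)$ as an $R_\GLN$-module; then there would exist a non-zero $r \in R_\GLN = \Z[e_1,\dots,e_N]$ with $r \cdot [S] = 0$. I would search for a multiset $\defset = \{\lambda_1,\dots,\lambda_N\} \subset \C$ of pairwise distinct complex numbers such that the specialization $\bar r \in R_\defset \cong \C$ of $r$ at the elementary symmetric polynomials $e_i(\lambda_1,\dots,\lambda_N)$ is non-zero. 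As $\defset$ ranges over distinct $N$-element multisets, the tuple $(e_1(\defset),\dots,e_N(\defset))$ sweeps out the Zariski-open complement $U\subset\C^N$ of the discriminant hypersurface, and since the vanishing locus of $r$ in $\C^N$ is a proper Zariski-closed subset, it cannot contain $U$; a suitable $\defset$ therefore exists.

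For this $\defset$, Proposition~\ref{prop:equivtodef} supplies a grading-preserving linear map $\Phi\colon R_\defset\otimes_{R_\GLN}\skeine(W;L)\to\skeins(W;L)$ sending $1\otimes[S]$ to the deformed surface class $[S]_\defset$. By the $R_\GLN$-balance of the tensor product, the relation $r\cdot[S]=0$ yields $\bar r\cdot(1\otimes[S])=1\otimes(r\cdot[S])=0$, and hence $\bar r\cdot[S]_\defset=0$ in $\skeins(W;L)$. Because $\skeins(W;L)$ is a $\C$-vector space and $\bar r\neq 0$, this forces $[S]_\defset=0$. However, $\defset$ has all multiplicities equal to $1$ and $S$ is homologically diverse, so Proposition~\ref{prop:techcond} guarantees $[S]_\defset\neq 0$ --- a contradiction. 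This proves $[S]$ cannot be torsion.

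The most delicate ingredient is the simultaneous genericity step: one needs $\lambda_1,\dots,\lambda_N$ distinct (so that Proposition~\ref{prop:techcond} applies) \emph{and} $\bar r\neq 0$. Both conditions define Zariski-open subsets of $\C^N$ (the first being the complement of the discriminant, the second the complement of the hypersurface cut out by $r$), and neither is empty, so their intersection is Zariski-open and non-empty. Beyond this elementary observation, the argument is a purely formal consequence of the naturality of specialization encoded in Proposition~\ref{prop:equivtodef} together with the deformed non-vanishing already established in Section~\ref{sec:nonvanishing}; no further input is required.
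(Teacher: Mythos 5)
Your proof is correct and follows the same strategy as the paper: assume torsion, specialize to a generic simple deformation $\defset$ where the torsion annihilator does not vanish, use Proposition~\ref{prop:equivtodef} to transport the surface class, and contradict Proposition~\ref{prop:techcond}. The paper's own proof is a one-sentence sketch; your write-up supplies the elementary but necessary Zariski-genericity argument (the simultaneous nonvanishing of the discriminant and the annihilator $r$) that the paper leaves implicit.
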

\begin{proof}
 If $[S]$ were torsion, we could find a set $\defset$ of complex deformation
 parameters, such that the image of $[S]$ under the map from
 \eqref{eqn:equivtodef} vanishes, in contradiction to
 Proposition~\ref{prop:techcond}.
\end{proof}

This finishes the proof of Theorem A and Corollary B is an immediate consequence.

\subsection{\revadd{Computations}}
\label{sec:comp}

% \begin{example}
% Consider $W=S^2 \times D^2$ presented as $B^4$ with a $2$-handle attached along
% the $0$-framed unknot. We take $\beta$ to be the generator of $H_2(W)\cong \Z$
% given by the class of the positively oriented $S^2=S^2 \times \{0\}\subset W$.
% Following the same arguments as in \cite[Section 5]{2020arXiv200908520M}, one
% can show that for $N=2$ the skein module $\skeine(W;k\beta)$ for $k\in \Z$ is
% isomorphic the free module over $R_{\GL(2)}\cong \Z[h,t]$ of $\deg_q(h)=2$,
% $\deg_q(t)=4$ spanned by parallel sphere configurations with $\max(k,0)+n$
% oriented spheres of class $\beta$ and $\max(-k,0)+ n$ spheres of class $-\beta$,
% possibly dotted. The $q$-degree of such a generator is $2(\#\mathrm{dots} -
% |k|-2n) $. Moreover, the relations ensure that the skein is spanned by
% configurations that have at most one dotted sphere. The graded rank for fixed
% homology class $k\beta$ is $1+q^{-2}+q^{-4}+\cdots= (1-q^{-2})^{-1}$. Here we
% have $q^2_{\min}(W,\emptyset,k\beta)=-\infty$ for all $k\in \Z$. \revise{Here the genus bound detects a family of spheres of arbitrarily large Euler characteristic, even though these configurations are not homologically diverse.}
% \end{example}

For $W=B^4$, we obtain a generalization of Rasmussen's \cite[Theorem 4]{MR2729272}.

\begin{lemma}\label{lem:poslink}
Let $L$ be a positive \revise {framed} link \revise{in $S^3$}, which we take to be represented by a \revise{blackboard-framed} link diagram \revise{$D$}
containing only positive crossings, \revise{and let $\alpha$ denote the unique homology class in $H_2(B^4)^L$}. Then the genus
bound resulting from $q_{\min}(B^4,L,\alpha)$ is sharp, witnessed by a Seifert
surface for $L$.
\end{lemma}
\begin{proof}
  Let $n\geq 1$ be the number of crossings in the positive diagram $D$ for $L$
  (the case of the unlink is easy to verify separately). Then for the unique
  relative second homology class $\alpha\in H_2(B^4)^L$ we have
  $\alpha\cdot\alpha=n$. Let $k$ be the number of circles in the oriented
  resolution of $D$. The chain complex computing the $\GLN$-equivariant link
  homology is a complex of free $A:=\KhR_{\GLN}(\bigcirc)$-modules and since the
  positive diagram $D$ is $+$adequate, see e.g. \cite{MR2399065}, the rightmost
  end of the complex is of the form
  \begin{equation}
    \label{eqref:poslink} \cdots \to \bigoplus_{n} q^{1-n}t^{n-1} A^{\otimes(k-2)}\otimes B  \to q^{-n}t^n A^{\otimes
k}\end{equation} 
\revise{where $A$ and $B$ are such that} $R1+:=(B \to q^{-1}t^1 A^{\otimes 2} \simeq q^{-N}t^1 A)$ \revise{describes} the complex
computing the homology of the $1$-framed unknot from its 1-crossing diagram.
The components of the single differential shown in
\eqref{eqref:poslink} are obtained from the differential in $R1+$ by tensoring
on the left or right with copies of identity morphisms on $A$ and possibly
adjusting by a sign. This implies, that the homology of \eqref{eqref:poslink} in
the right-most homological degree, the cokernel of the shown differential, is
isomorphic to the right-most homology of the complex computing the homology of
the $(k-1)$-framed unknot from its blackboard framed diagrams with $k-1$
crossings, shifted in bidegree by $((k-1)-n,n-(k-1))$. The right-most homology is
hence free over $R_\GLN$, with lowest $q$-degree generator supported in bidegree
$(k(1-N)-n,n)$.
Here the contributions to the $q$-degree are $(k-1)-n$ from the shift mentioned
above, $-N(k-1)$ for the $(k-1)$-fold framing shift relative to the $0$-framed unknot
homology and $1-N$ for the lowest generator in the $0$-framed unknot homology.

Seifert's algorithm produces a Seifert surface $S$ of Euler characteristic
$k-n$. We check the effectiveness of the genus bound from Corollary B by
computing:
\[
  \chi(S)=k-n \leq \frac{-N [S]\cdot[S] - q_{\min} }{N-1} = \frac{-N n - (k(1-N)-n) }{N-1} = k-n
\]
so the bound is sharp.
\end{proof}
\begin{remark}
  Let us compare the situation for a negative link with $n\geq 1$. In that case we have a chain complex:
  \begin{equation}
    \label{eqref:neglink} q^{n}t^{-n} A^{\otimes
    k} \to \bigoplus_{n} q^{n-1}t^{1-n} A^{\otimes(k-2)}\otimes B  \to \cdots \end{equation} 
Here we compare to the complex $R1-:=(q^{1}t^{-1} A^{\otimes 2} \to B \simeq
q^{N}t^{-1} A)$ that computes the homology of the $(-1)$-framed unknot from its
1-crossing diagram. The homology of \eqref{eqref:neglink} in the left-most
homological degree, the kernel of the shown differential, is isomorphic to the
left-most homology of the complex computing the homology of the $(1-k)$-framed
unknot from its blackboard framed diagrams with $k-1$ crossings, shifted in
bidegree by $(n-(k-1),(k-1)-n)$. The left-most homology is hence free over
$R_\GLN$, with lowest $q$-degree generator supported in bidegree
$((2-k)(1-N)+n,-n)$. The genus bound from Corollary B is:
\[
  \chi(S)=k-n \leq \frac{-N [S]\cdot[S] - q_{\min} }{N-1} = \frac{-N (-n) - ((2-k)(1-N)+n) }{N-1} = n+ (k-2)
\]
Unless $n=1$, this is not sharp.

\end{remark}

Let $W$ be a $4$-manifold that admits a handle decomposition with a single
$0$-handle and no $1$-handles, and let $L\subset \partial W$ be a link. In
\cite{MR4589588}, building on
\cite{2020arXiv200908520M}, Manolescu and two of the authors have shown that the
skein module $\skein(W;L)$ can be computed as a quotient of a countable direct
sum of $\glN$ link homologies $\KhR_N(L\cup K_\gamma)$ with suitable grading
shifts, where $K_\gamma$ ranges over certain cables of the attaching link of the
$2$-handles of $W$. This is based on a careful study of the action of
attaching/removing $4$-, $3$- and $2$-handles to $\skein(W;L)$. The same results
hold mutatis mutandis in the grading conventions of Remark~\ref{rem:gradconv}
and in the equivariant setting, i.e. for $\skeine(W;L)$. This yields a recipe
for computing $q^N_{\min}(W,L,\alpha)$ based on the homologies of a family of links and cobordism maps between them. \revadd{For $W=S^2 \times D^2$ presented as $B^4$ with a $2$-handle attached along the $0$-framed unknot, this can be done as in \cite[Section 5]{2020arXiv200908520M}.}

\begin{remark}[Approaching the Thom conjecture/Kronheimer--Mrowka theorem via skein theory]
  \label{rem:Thom}
  For $f \geq 0$ let $L^f_{n,m}$ denote the result of cabling an $f$-framed
    unknot with a parallel cable of $n$ strands running one way and $m$ strands
    running the other way. In the following we specialize to the case $f=1$ and
    $n=m+r$ for $r\geq 0$.   The $s$-invariant of $L^1_{n,m}$ was computed in \cite{ren2023lee} As
    \[
      s(L^1_{n,m})=(r-1)^2 -2m
      \]
    which yields the following bound for the Euler characteristic for a surface in $B^4$ bounding $L^1_{n,m}$:
\[
\chi(S) \leq 2m + 2 r - r^2 
\]
This bound is sharp for $m=0$. Conversely, the Thom conjecture, first proven by
Kronheimer--Mrowka using gauge theory \cite{MR1306022}, implies that the minimal
genus of a connected smooth surface $S'\subset \C P^2$ in homology class $[S']=r[\C P^1]$
for $r\geq 1$ is $(r-1)(r-2)/2$. Consider the standard handle decomposition of
$\C P^2$ with a single $2$-handle attached along the $1$-framed unknot. Suppose
that the surface $S'$ intersects the cocore of the $2$-handle transversely in $2m+r$ points, $m+r$ positively and $m$ negatively. Then
drilling out the cocore yields a connected surface $S$ of Euler characteristic
\[
\chi(S)= \chi(S')- (2m+r) \leq  2- (r-1)(r-2) -(2m+r) =  -2m +2r - r^2
\] 
which is strictly less than the bound derived from the $s$-invariant if
$m>1$. Let us assume that we work with a variant of equivariant $N=2$ skein
modules, for which the Euler characteristic bounds given by $q^2_{\min}$
coincide with the bounds from the $s$-invariant. Then we obtain:

\[
   q^2_{\min}(B^4,L^1_{n,m},0) = -2m - 2r + r^2
\]

By an equivariant analog of the 2-handle formula of Manolescu--Neithalath
\cite{2020arXiv200908520M} or, relatedly, an equivariant analog of the Kirby
color for Khovanov homology \cite{hogancamp2022kirby}, the $N=2$
skein module of $\C P^2$ is expected to admit a surjection:
\[\bigoplus_{m\geq 0}
q^{-2m-r}\skeine(B^4;L^1_{m+r,m}, 0) \twoheadrightarrow \skeine(\C P ^2;\emptyset, r [\C P^1])  \] where the term indexed
by $m$ has mod-torsion part supported in $q$-degrees bounded below by
$-4m-3r+r^2$. 

If one could show that the image of everything below degree $-3r+r^2$ is
torsion, this would imply 
\[q^2_{\min}(\C P ^2,\emptyset, r [\C P^1])\geq -3r+r^2\] and the associated
Euler characteristic bound from Corollary~B applies to the
connected (and thus homologically diverse) surface $S'$. This would amount to a purely
skein-theoretic proof of the Thom conjecture. \smallskip

We have tested this idea and preliminary computations in a simplified setting
(based on equivariant Khovanov homology over $\C[X,\alpha]/(X^2-\alpha)$)
indicate that there will be non-torsion in lower degrees, unfortunately. We have
decided to include this remark anyway, since the strategy might also be applicable in
other skein theories.

\end{remark}

\renewcommand*{\bibfont}{\small}
\setlength{\bibitemsep}{0pt}
\raggedright
\printbibliography

\end{document}